\newtheorem{theorem}{Theorem}[section]
\newtheorem{lemma}[theorem]{Lemma}
\newtheorem{corollary}[theorem]{Corollary}
\newtheorem{proposition}[theorem]{Proposition}
\newcommand{\n}[1]{||#1||}
\theoremstyle{definition}
\newtheorem{definition}[theorem]{Definition}
\newtheorem*{solution*}{Solution}
\theoremstyle{remark}
\newtheorem{remark}[theorem]{Remark}
\newtheorem*{pf*}{Pf}
\newtheorem*{pfbase*}{Proof of Base Case}
\newtheorem*{pfstep*}{Proof of Inductive Step}
\numberwithin{equation}{section}
\newcommand{\C}{\mathbb{C}}
\newcommand{\R}{\mathbb{R}}
\newcommand{\Z}{\mathbb{Z}}
\newcommand{\U}{\mathscr{U}}
\newcommand{\M}{\mathscr{M}}
\begin{document}

\allowdisplaybreaks

\title[Invariant Spaces of Measurable Functions]{Certain Invariant Spaces of Bounded Measurable Functions on a Sphere}

\author{Samuel A. Hokamp}
\address{Department of Mathematics, St. Norbert College, De Pere, Wisconsin 54115}
\email{samuel.hokamp@snc.edu}

\subjclass[2010]{Primary 46E30. Secondary 32A70}

\date{\today}

\keywords{Spaces of measurable functions, several complex variables, functional analysis.\\\indent\emph{Corresponding author.} Samuel A. Hokamp \Letter~\href{mailto:samuel.hokamp@snc.edu}{samuel.hokamp@snc.edu} \phone~920-403-2947.}

\begin{abstract}
In their 1976 paper, Nagel and Rudin characterize the closed unitarily and M\"obius invariant spaces of continuous and $L^p$-functions on a sphere, for $1\leq p<\infty$. In this paper we provide an analogous characterization for the weak*-closed unitarily and M\"obius invariant spaces of $L^\infty$-functions on a sphere. We also investigate the weak*-closed unitarily and M\"obius invariant algebras of $L^\infty$-functions on a sphere.
\end{abstract}

\maketitle

\section{Introduction}

Let $n$ be a positive integer and $S$ the unit sphere of $\C^n$. We say a space of complex functions defined on $S$ is \textit{unitarily invariant} if the pre-composition of any function in the set with a unitary transformation remains in the set. Similarly, we say a space of complex functions defined on $S$ is \textit{M\"obius invariant} if the pre-composition of any function in the set with a biholomorphic map of the complex unit ball onto itself remains in the set. It is clear that M\"obius invariance implies unitary invariance.

In \cite{NR}, Nagel and Rudin determine the closed unitarily and M\"obius invariant spaces of continuous and $L^p$-functions on $S$, for $1\leq p<\infty$. The closed M\"obius invariant algebras of continuous and $L^p$-functions on $S$, for $1\leq p<\infty$, are also characterized in \cite{NR}, and the closed unitarily invariant algebras of continuous functions on $S$ are considered in \cite{RUA}.

In this paper, we consider $L^\infty(S)$ with the weak*-topology, thereby obtaining results for $L^\infty(S)$ which are analogous to Nagel and Rudin's results for continuous functions.

In Section \ref{U inv spaces}, we determine the weak*-closed unitarily invariant subspaces of $L^\infty(S)$ (Theorem \ref{weak*-closed U inv}), and in Section \ref{U inv algebras}, we show that those weak*-closed unitarily invariant subspaces of $L^\infty(S)$ which are algebras correspond in a certain way to the closed unitarily invariant subalgebras of continuous functions (Theorem \ref{algebra patterns}). This correspondence leads to several results for weak*-closed unitarily invariant subalgebras of $L^\infty(S)$ (Theorem \ref{Linf G(d)}, Theorem \ref{Linf G(sig)}, and Theorem \ref{smallest Linf alg pattern}) that are analogous to results for closed unitarily invariant algebras of continuous functions from \cite{RUA}. These results are given in Section \ref{applications}.

In Section \ref{M inv spaces}, we determine the weak*-closed M\"obius invariant subspaces of $L^\infty(S)$ (Theorem \ref{M-inv subspaces of Linf}), and in Section \ref{M inv algebras}, we determine which of the weak*-closed M\"obius invariant subspaces of $L^\infty(S)$ are algebras (Theorem \ref{M-inv subalgebras of Linf}).

\section{Preliminaries} 

\subsection{Notation and Definitions}

We let $n$ be a positive integer and $S$ and $B$ be the unit sphere and unit ball of $\C^n$, respectively, and $\overline{B}$ the closed unit ball.

We let $\sigma$ denote the unique \textit{rotation-invariant} positive Borel measure on $S$ for which $\sigma(S)=1$. The phrase ``rotation-invariant'' refers to the orthogonal group $O(2n)$, the group of isometries on $\R^{2n}$ that fix the origin. The notation $L^p(S)$ then denotes the usual Lebesgue spaces, for $1\leq p\leq\infty$, in reference to the measure $\sigma$, with the usual norms $\n{\cdot}_p$.

For $Y\subset C(S)$, the uniform closure of $Y$ is denoted $\overline{Y}$, and for $Y\subset L^p(S)$, $1\leq p\leq\infty$, the norm-closure of $Y$ in $L^p(S)$ is denoted $\overline{Y}^{p}$. Additionally, for $Y\subset L^\infty(S)$, the weak*-closure of $Y$ in $L^\infty(S)$ is denoted $\overline{Y}^{*}$.

Occasionally, we let $X$ denote either $C(S)$ or one of the spaces $L^p(S)$ for $1\leq p\leq\infty$. Then, if $Y\subset X$, the norm-closure of $Y$ in $X$ is denoted $\overline{Y}^X$.

\begin{remark}\label{weak* and weak Lp closure containment}
For $Y\subset L^\infty(S)$ convex and $1\leq p<\infty$, we have $$\overline{Y}^{*}\subset \overline{Y}^{p}\cap L^\infty(S).$$
\end{remark}

This follows from the fact that the topology $L^\infty(S)$ inherits from the weak topology on each $L^p(S)$ is weaker than the weak*-topology, and from the local convexity of $L^p(S)$.

We let $\U=\U(n)$ denote the group of unitary operators on the Hilbert space $\C^n$. Then $\U$ is a compact subgroup of $O(2n)$, and we use $dU$ to denote the Haar measure on $\U$.

\begin{proposition}[\cite{RFT} Proposition 1.4.7]\label{prop147}
Let $f\in L^1(S)$ and $z\in S$. Then $$\int_S f\,d\sigma=\int_\U f(Uz)\,dU.$$
\end{proposition}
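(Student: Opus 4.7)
The plan is to exploit uniqueness of the rotation-invariant probability measure on $S$. I would fix $z \in S$ and construct a probability measure $\mu_z$ on $S$ whose integral against continuous functions equals the right-hand side, verify that $\mu_z$ is $\U$-invariant, and invoke uniqueness to conclude $\mu_z = \sigma$.

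First, for $f \in C(S)$, I would set
$$\Lambda_z(f) = \int_{\U} f(Uz)\, dU.$$
Since $\U$ is compact and $f$ is continuous, $U \mapsto f(Uz)$ is continuous on $\U$, so $\Lambda_z$ is a well-defined positive linear functional on $C(S)$ with $\Lambda_z(1) = 1$. The Riesz representation theorem then produces a unique regular Borel probability measure $\mu_z$ on $S$ such that $\Lambda_z(f) = \int_S f\, d\mu_z$ for every $f \in C(S)$.

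Next, I would verify that $\mu_z$ is $\U$-invariant: for any $V \in \U$ and $f \in C(S)$, left-invariance of Haar measure on $\U$ gives
$$\int_S f \circ V\, d\mu_z = \int_{\U} f(VUz)\, dU = \int_{\U} f(Uz)\, dU = \int_S f\, d\mu_z.$$
Because $\U$ acts transitively on $S$, a standard compact-group argument shows there is exactly one $\U$-invariant Borel probability measure on $S$; since $\sigma$ is $O(2n)$-invariant and $\U \subset O(2n)$, $\sigma$ is itself $\U$-invariant, so $\mu_z = \sigma$. This proves the identity for every $f \in C(S)$.

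For the extension to $L^1(S)$, the equality $\mu_z = \sigma$ says that $U \mapsto Uz$ pushes $dU$ forward to $\sigma$; in particular every $\sigma$-null set pulls back to a $dU$-null set, so for any $L^1(\sigma)$-class $f$ the function $U \mapsto f(Uz)$ is well-defined $dU$-a.e.\ and lies in $L^1(\U, dU)$. Approximating $f$ by continuous $f_k$ in $L^1(S)$, the continuous case applied to $|f_k - f_j|$ shows that $\{f_k(\,\cdot\, z)\}$ is Cauchy in $L^1(\U)$ with limit $U \mapsto f(Uz)$ almost everywhere, and passing to the limit on both sides of the continuous identity yields the general case. The one genuine obstacle I anticipate is precisely this $L^1$ extension: the right-hand integrand is \emph{a priori} only defined for $\sigma$-a.e.\ $w = Uz$, and the identification $\mu_z = \sigma$ is what promotes $U \mapsto f(Uz)$ to an honest $L^1(\U)$ function for \emph{every} $z \in S$.
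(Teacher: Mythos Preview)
The paper does not actually prove this statement; it is quoted from \cite{RFT} (Rudin, \emph{Function Theory in the Unit Ball of $\C^n$}, Proposition~1.4.7) and used as a known preliminary fact. Your argument is correct and is essentially the same as Rudin's: define a measure on $S$ via the orbit map $U\mapsto Uz$, check $\U$-invariance from left-invariance of Haar measure, and invoke the uniqueness of the $\U$-invariant probability on $S$ (transitivity of the $\U$-action) to identify it with $\sigma$; the $L^1$ extension then follows because the identity $\mu_z=\sigma$ makes $U\mapsto f(Uz)$ a bona fide $L^1(\U)$ function. There is nothing to add or correct.
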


\begin{remark}\label{int U-invariant}
Let $f\in L^1(S)$ and $U\in\U$. Then $$\int_S f\,d\sigma=\int_S f\circ U\,d\sigma.$$
\end{remark}

We let $\M=\M(n)$ denote the \textit{M\"obius group} in dimension $n$. This is the group of injective holomorphic maps of $B$ onto $B$. Important to note is that each element of $\M$ extends to a homeomorphism of $\overline{B}$ onto $\overline{B}$, and thus maps $S$ onto $S$.

We let $Q$ denote the set of pairs of nonnegative integers; that is, $$Q=\{(p,q):p,q\in\Z\text{ and }p\geq 0,q\geq 0\}.$$

On the sets $S$ and $B$ we define the following spaces of complex functions:
\begin{itemize}
    \item[(1)] $C(S)$ is the space of continuous functions on $S$.
    \item[(2)] $A(S)$ is the space of functions which are restrictions to $S$ of holomorphic functions on $B$ that are continuous on $\overline{B}$.
    \item[(3)] $H^\infty(B)$ is the space of bounded holomorphic functions on $B$.
    \item[(4)] $H^\infty(S)$ is the space of functions in $L^\infty(S)$ which are radial limits almost everywhere of functions in $H^\infty(B)$.
\end{itemize}

If $X$ is a space of complex functions, then $\text{Conj}(X)$ denotes the space of conjugates of members of $X$.

\begin{definition}[\cite{NR} Section IV.]
We say a space of complex functions $Y$ with domain $S$ is \textbf{unitarily invariant} ($\U$-invariant) if $f\circ U\in Y$ for every $f\in Y$ and every $U\in\U$.
\end{definition}

\begin{definition}[\cite{NR} Section I.]
We say a space of complex functions $Y$ with domain $S$ is \textbf{M\"obius invariant} ($\M$-invariant) if $f\circ\phi\in Y$ for every $f\in Y$ and every $\phi\in\M$.
\end{definition}

\subsection{The Spaces \texorpdfstring{$H(p,q)$}{H(p,q)} and the Maps \texorpdfstring{$\pi_{pq}$}{ppq}}

For each $(p,q)\in Q$, we define $H(p,q)$ to be the space of harmonic, homogeneous polynomials on $S$ with total degree $p$ in the variables $z_1,z_2,\cdots,z_n$ and total degree $q$ in the variables $\bar{z}_1,\bar{z}_2,\cdots,\bar{z}_n$. These spaces $H(p,q)$ are finite-dimensional (hence closed) subspaces of $C(S)$ and $L^2(S)$.

If $\Omega$ is any subset of $Q$, we define the set $E_\Omega$ to be the algebraic sum of the spaces $H(p,q)$ such that $(p,q)\in\Omega$. That is, $$E_\Omega=\Big\{\sum_{(p,q)\in A} f_{pq}:f_{pq}\in H(p,q),\text{ }A\text{ any finite subcollection of }\Omega\Big\}.$$

Our notation for the spaces $H(p,q)$ and $E_\Omega$ is the same used by Rudin in \cite{RFT}. For the maps $\pi_{pq}$ (Theorem \ref{L2 Hpq structure}(c)), the notation in \cite{NR} and \cite{RFT} is the same, so we use it as well.

\begin{theorem}[\cite{NR} Theorem 4.6]\label{L2 Hpq structure}
Let $(p,q)\in Q$.
\begin{itemize}
    \item[(a)] For every $z\in S$, $H(p,q)$ contains a unique function $K_z$ that satisfies
    \begin{itemize}
        \item[(i)] $K_z\circ U=K_z$ for every $U\in\U$ with $Uz=z$, and
        \item[(ii)] $\displaystyle\int_S|K_z|^2\,d\sigma=K_z(z)>0$.
    \end{itemize}
    \item[(b)] $H(p,q)$ is $\U$-minimal; that is, $H(p,q)$ is $\U$-invariant with no proper $\U$-invariant subspaces.
    \item[(c)] If $X$ is a function space with domain $S$ and $\pi_{pq}$ is defined for $f\in X$ by $$\pi_{pq}f(z)=\int_S f\overline{K_z}\,d\sigma,$$ then $\pi_{pq}$ is a projection of $X$ onto $H(p,q)$.
    \item[(d)] Each $\pi_{pq}$ is an orthogonal projection in $L^2(S)$. Further, the spaces $H(p,q)$ are pairwise orthogonal in $L^2(S)$, and $L^2(S)$ is the direct sum of the spaces $H(p,q)$. That is, $$L^2(S)=\overline{E}_Q^{2}$$
\end{itemize}
\end{theorem}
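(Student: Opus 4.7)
The plan is to treat each $H(p,q)$ as a finite-dimensional Hilbert subspace of $L^2(S)$ and exploit its reproducing-kernel structure together with the $\U$-action. Since $H(p,q)$ is finite-dimensional, the linear functional $f\mapsto f(z)$ on $H(p,q)$ is continuous for every $z\in S$, so by Riesz representation there is a unique element---call it $K_z$---of $H(p,q)$ satisfying $f(z)=\int_S f\overline{K_z}\,d\sigma$ for all $f\in H(p,q)$. Taking $f=K_z$ in this identity produces condition (ii) of part (a), and $K_z(z)>0$ because $H(p,q)$ is a nonzero $\U$-invariant space and $\U$ acts transitively on $S$, so some $f\in H(p,q)$ does not vanish at $z$, forcing $K_z\neq 0$. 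For condition (i), I would apply Remark \ref{int U-invariant} to check that if $Uz=z$, then $K_z\circ U$ also reproduces evaluation at $z$; Riesz uniqueness then gives $K_z\circ U=K_z$.

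The subtle point in (a) is \emph{uniqueness of $K_z$ subject only to conditions} (i) and (ii): any other element of $H(p,q)$ satisfying them must coincide with the reproducing kernel. This reduces to showing that the subspace of $\U_z$-invariant elements of $H(p,q)$, with $\U_z=\{U\in\U:Uz=z\}$, is one-dimensional, for then (i) places any candidate on a complex line and (ii) pins down the scalar. Establishing this ``sphericity'' of the representation $H(p,q)$ is the main obstacle. I would approach it by direct computation: a $\U_z$-invariant homogeneous harmonic polynomial of bidegree $(p,q)$ must, by averaging over $\U_z$, depend only on the $\U_z$-invariants $\inpd{w}{z}$ and $\overline{\inpd{w}{z}}$, and bihomogeneity together with harmonicity then forces a one-parameter family.

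Once (a) is in hand, (b) follows by an averaging argument. Given a nonzero closed $\U$-invariant subspace $V\subset H(p,q)$, pick $f\in V$ and $z_0\in S$ with $f(z_0)\neq 0$, and form $g:=\int_{\U_{z_0}}f\circ U\,dU$. Then $g\in V$ (by $\U$-invariance and closedness), $g$ is $\U_{z_0}$-invariant, and $g(z_0)=f(z_0)\neq 0$, so by (a), $g$ is a nonzero scalar multiple of $K_{z_0}$, and hence $K_{z_0}\in V$. A change of variables in the reproducing identity gives the transformation rule $K_{Uz_0}=K_{z_0}\circ U^{-1}$; combining this with $\U$-invariance of $V$ and transitivity of $\U$ on $S$ yields $K_z\in V$ for every $z\in S$. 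Since $\{K_z:z\in S\}$ spans $H(p,q)$---any $f\in H(p,q)$ orthogonal to all $K_z$ vanishes everywhere by the reproducing property---we conclude $V=H(p,q)$.

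For (c), the identity $K_z(w)=\overline{K_w(z)}$, a standard reproducing-kernel fact verified by reproducing $K_w$ against $K_z$, rewrites $\pi_{pq}f(z)=\int_S f(w)K_w(z)\,d\sigma(w)$, exhibiting $\pi_{pq}f$ as a vector-valued integral in the finite-dimensional space $H(p,q)$; hence $\pi_{pq}f\in H(p,q)$. On $H(p,q)$ the map $\pi_{pq}$ is the identity by the reproducing property, so $\pi_{pq}^2=\pi_{pq}$. For (e), orthogonality of $\pi_{pq}$ on $L^2(S)$ is immediate: if $g\perp H(p,q)$ then $\pi_{pq}g(z)=\int_S g\overline{K_z}\,d\sigma=0$ because $K_z\in H(p,q)$. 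Pairwise orthogonality of distinct $H(p,q)$ and $H(p',q')$ I would derive by combining a $\U(1)$-weight argument (taking $U=e^{i\theta}I$ in Remark \ref{int U-invariant} shows $\int f\bar g\,d\sigma$ vanishes unless $p-q=p'-q'$) with a Laplace--Beltrami eigenvalue argument distinguishing different total degrees $p+q$, or equivalently by invoking Schur's lemma for the $\U$-irreducible spaces of part (b). Finally, density of $\bigoplus_{(p,q)\in Q}H(p,q)$ in $L^2(S)$ follows from Stone--Weierstrass---polynomials in $z$ and $\bar z$ are dense in $C(S)$, hence in $L^2(S)$---combined with the decomposition of each such polynomial into its bihomogeneous harmonic components.
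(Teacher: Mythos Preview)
The paper does not prove this theorem: it is stated in the Preliminaries as a cited result from Nagel and Rudin \cite{NR} (see also Rudin's monograph \cite{RFT}), so there is no in-paper proof to compare against. Your proposal is correct and follows essentially the standard route taken in those references---the reproducing-kernel construction of $K_z$, the one-dimensionality of the $\U_z$-fixed vectors in $H(p,q)$, the averaging argument for $\U$-minimality, and Stone--Weierstrass for density---so there is nothing substantive to contrast.

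One small comment on your sketch: for the pairwise orthogonality in (e), once you have (b) and know the spaces $H(p,q)$ are pairwise distinct, Schur's lemma alone suffices; the separate $\U(1)$-weight and Laplace--Beltrami arguments are not both needed, though either serves as a concrete way to see inequivalence of the $\U$-modules.
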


Observe that each $E_\Omega$ is $\U$-invariant since each $H(p,q)$ is $\U$-invariant.

\begin{remark}\label{f_pq=pi_pq f}
Explicitly, Theorem \ref{L2 Hpq structure}(d) says that each $f\in L^2(S)$ has a unique expansion $f=\sum f_{pq}$, with each $f_{pq}\in H(p,q)$, which converges unconditionally to $f$ in the $L^2$-norm. Since $\pi_{pq}$ is the identity map on $H(p,q)$ and the spaces $H(p,q)$ are pairwise orthogonal, we have $f_{pq}=\pi_{pq}f$ for $(p,q)\in Q$. Thus, $$f=\sum\pi_{pq}f.$$
\end{remark}

Each $\pi_{pq}$ is continuous as the orthogonal projection of $L^2(S)$ onto the finite-dimensional subspace $H(p,q)$. Thus, $\pi_{pq}$ annihilates a subset of $L^2(S)$ if and only if it annihilates its closure. This proves the following theorem:

\begin{theorem}\label{pi pq closure}
If $Y\subset L^2(S)$, then $\pi_{pq}Y=0$ if and only if $\pi_{pq}\overline{Y}^{2}=0$.
\end{theorem}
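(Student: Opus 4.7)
The plan is to verify the two implications separately, with the nontrivial direction resting entirely on the continuity of $\pi_{pq}$ as an operator on $L^2(S)$, which the paragraph immediately preceding the theorem already justifies.

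For the forward direction, I would simply observe that $Y\subset\overline{Y}^{2}$, so if $\pi_{pq}\overline{Y}^{2}=0$ then certainly $\pi_{pq}Y=0$. No hypothesis on $\pi_{pq}$ is needed here.

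For the reverse direction, the approach is to identify the kernel of $\pi_{pq}$ as a closed subspace of $L^2(S)$ and apply the elementary fact that a closed set containing $Y$ must contain $\overline{Y}^{2}$. Concretely, I would first note that $\pi_{pq}:L^2(S)\to H(p,q)$ is a bounded linear operator. This can be seen either by invoking the fact (recorded in Theorem \ref{L2 Hpq structure}(e)) that $\pi_{pq}$ is the orthogonal projection onto the finite-dimensional subspace $H(p,q)$, or directly from the integral formula $\pi_{pq}f(z)=\int_S f\overline{K_z}\,d\sigma$ together with the Cauchy--Schwarz inequality, since $K_z\in L^2(S)$. Consequently $\Ker\pi_{pq}$ is a closed subspace of $L^2(S)$. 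The hypothesis $\pi_{pq}Y=0$ is exactly $Y\subset\Ker\pi_{pq}$, so by taking $L^2$-closures one obtains $\overline{Y}^{2}\subset\Ker\pi_{pq}$, i.e.\ $\pi_{pq}\overline{Y}^{2}=0$.

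There is no real obstacle here: the statement is essentially a restatement of the continuity of $\pi_{pq}$, and the argument is a single line once that continuity is in hand. The only small point worth stating cleanly is the choice of justification for boundedness of $\pi_{pq}$; I would prefer the direct Cauchy--Schwarz route from part (c) of Theorem \ref{L2 Hpq structure}, since it makes the proof self-contained and does not require separately invoking the orthogonal-projection interpretation from part (e).
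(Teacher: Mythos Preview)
Your proposal is correct and matches the paper's approach exactly: the paper's proof is the single-sentence observation preceding the theorem that $\pi_{pq}$ is continuous (as the orthogonal projection onto the finite-dimensional subspace $H(p,q)$), hence annihilates a set if and only if it annihilates its closure. Your alternative justification of boundedness via Cauchy--Schwarz is fine but not needed, and note that what you call the ``forward direction'' is actually the converse implication; the content is correct regardless.
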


\begin{remark}\label{E Omega L2 pi pq}
For each set $\Omega\subset Q$, we have $$\overline{E}_\Omega^{2}=\{f\in L^2(S):\pi_{pq}f=0\text{ when }(p,q)\notin\Omega\}.$$
This follows from Remark \ref{f_pq=pi_pq f} and Theorem \ref{pi pq closure}.
\end{remark}

\subsection{\texorpdfstring{$\U$}{U}- and \texorpdfstring{$\M$}{M}-Invariant Subspaces and Subalgebras of \texorpdfstring{$C(S)$}{C(S)}}

In this section we summarize the results of Nagel and Rudin concerning the unitarily and M\"obius invariant subspaces and subalgebras of $C(S)$. The results we draw on for this paper are formulated in \cite{NR} and \cite{RUA}, but the reader can also find them presented in \cite{RFT}. 

In \cite{NR}, Nagel and Rudin establish that each closed $\U$-invariant subspace of $C(S)$ or $L^p(S)$ for $1\leq p<\infty$ corresponds to some $\Omega\subset Q$. This $\Omega$ is the set of all $(p,q)\in Q$ such that $\pi_{pq}$ does not annihilate the space.

\begin{theorem}[\cite{NR} Theorem 4.4]\label{main C and Lp result}
Let $X$ be any of the spaces $C(S)$ or $L^p(S)$ for $1\leq p<\infty$. If $Y$ is a closed $\U$-invariant subspace of $X$, then $Y=\overline{E}_\Omega^X$ for some $\Omega\subset Q$.
\end{theorem}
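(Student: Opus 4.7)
Define $\Omega = \{(p,q) \in Q : \pi_{pq} Y \neq (0)\}$; the claim is that $Y = \overline{E}_\Omega^X$, and I would prove it by establishing both inclusions.

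For $\overline{E}_\Omega^X \subseteq Y$, the pivotal observation is that $\pi_{pq} f \in Y$ for every $f \in Y$ and every $(p,q)\in Q$. I would express $\pi_{pq}$ as an average over $\U$: since $H(p,q)$ is an irreducible representation of $\U$ (Theorem \ref{L2 Hpq structure}(b)) and appears with multiplicity one in $L^2(S)$ (Theorem \ref{L2 Hpq structure}(e)), Peter--Weyl / Schur orthogonality gives
$$\pi_{pq} f \;=\; d_{pq}\int_{\U} \overline{\chi_{pq}(U)}\,(f \circ U^{-1})\, dU,$$
where $\chi_{pq}$ and $d_{pq}$ are the character and dimension of that representation. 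The orbit map $U \mapsto f\circ U^{-1}$ is continuous from $\U$ into $X$---by uniform continuity when $X = C(S)$, and by density of $C(S)$ together with the isometric action when $X = L^p(S)$, $1\leq p<\infty$---so the Bochner integral above defines an element of $X$ that must belong to $Y$, since $Y$ is norm-closed, convex, and $\U$-invariant. Now if $(p,q) \in \Omega$, then some $f \in Y$ has $\pi_{pq} f \neq 0$, so $Y \cap H(p,q)$ is a nonzero $\U$-invariant subspace of $H(p,q)$, and $\U$-minimality forces $H(p,q) \subseteq Y$. Hence $E_\Omega \subseteq Y$, and passage to the closure yields $\overline{E}_\Omega^X \subseteq Y$.

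For the reverse inclusion $Y \subseteq \overline{E}_\Omega^X$, fix $f \in Y$; by definition of $\Omega$, $\pi_{pq} f = 0$ for every $(p,q)\notin\Omega$, so the task is to recover $f$ from its ``spectral components'' $\pi_{pq} f$ indexed by $\Omega$. I would introduce a summability kernel diagonalized by the decomposition of Theorem \ref{L2 Hpq structure}(e), for instance the heat semigroup $e^{-t\Delta_S}$ associated to the Laplace--Beltrami operator on $S$, which acts on $H(p,q)$ as multiplication by a scalar $e^{-t\lambda_{pq}}$. The resulting approximants
$$f_t \;=\; \sum_{(p,q)\in Q} e^{-t\lambda_{pq}}\,\pi_{pq} f \qquad (t>0)$$
lie in $\overline{E}_\Omega^X$---only terms with $(p,q)\in\Omega$ survive, and the Gaussian-type decay of $e^{-t\lambda_{pq}}$ makes the series converge in $X$---and they approach $f$ in the norm of $X$ as $t\to 0^+$; letting $t\to 0^+$ places $f$ in $\overline{E}_\Omega^X$.

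The principal obstacle is the convergence $f_t\to f$ in this last step. The $L^2$ case is automatic from the Hilbert-space orthogonality (Remark \ref{E Omega L2 pi pq}), but uniform convergence on $C(S)$ and $L^p$-convergence for $p\neq 2$ require kernel estimates on $S$ combined with a uniform boundedness argument of the type standard in harmonic analysis on the sphere; alternatively, one may replace the heat kernel by a Poisson--Szegő or Kor\'anyi--Poisson kernel, whose positivity and known boundary behaviour deliver the required approximation in every ambient space $X$ simultaneously.
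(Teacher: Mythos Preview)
The paper does not itself prove this theorem---it is quoted from \cite{NR}, and the paper only remarks that the proof in \cite{RFT} proceeds via Lemma~\ref{main C(S) lemma} and Lemma~\ref{U to X cont}. The paper's own argument for the $L^\infty$ analogue (Theorem~\ref{weak*-closed U inv}, resting on Lemma~\ref{main lemma}) follows that same \cite{RFT} pattern, so that is the natural point of comparison.

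For the inclusion $\overline{E}_\Omega^X\subseteq Y$, your argument and the \cite{RFT} approach coincide in substance: both express $\pi_{pq}f$ as a vector-valued integral over $\U$ and use the continuity of $U\mapsto f\circ U$ (precisely Lemma~\ref{U to X cont}) together with closedness of $Y$ to place that integral in $Y$. One caveat: your character formula computes the projection onto the \emph{isotypic} component of $H(p,q)$, so it equals $\pi_{pq}$ only because the $H(p,q)$ are pairwise inequivalent $\U(n)$-representations---true, but not recorded in the paper. Rudin sidesteps this by writing $\pi_{pq}f$ directly as a $\U$-average against the zonal kernel $K_z$.

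For the inclusion $Y\subseteq\overline{E}_\Omega^X$, the routes genuinely differ. The \cite{RFT} argument (and, in parallel, the paper's proof of Lemma~\ref{main lemma}) is a separation argument: if $g\notin\overline{E}_\Omega^X$, a Hahn--Banach functional in $X^*$ separates $g$ from $E_\Omega$, and averaging that functional over a small neighbourhood in $\U$ upgrades it to an $L^2$-bounded functional still separating $g$ from $E_\Omega$; since $\pi_{pq}g=0$ off $\Omega$ already forces $g\in\overline{E}_\Omega^{2}$, this is a contradiction. Your approach is instead constructive: convolve $f$ with a positive approximate identity (heat or Poisson) diagonal in the $H(p,q)$-decomposition, so the approximants lie visibly in $\overline{E}_\Omega^X$ and return to $f$ in the $X$-norm. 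Both arguments are correct. The separation route is shorter and needs no kernel estimates; your route exhibits the approximants explicitly but leans on the boundary behaviour of the Poisson (or heat) kernel on $S$, which is exactly the ``principal obstacle'' you flag.
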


The proof of Theorem \ref{main C and Lp result} in \cite{RFT} is different than in \cite{NR}. In particular, the proof in \cite{RFT} uses Lemma \ref{main C(S) lemma}, whose proof in turn requires Lemma \ref{U to X cont}.

\begin{lemma}[\cite{RFT} Lemma 12.3.5]\label{main C(S) lemma}
If $Y\subset C(S)$ is a closed $\U$-invariant space. Then for $g\in C(S)$, we have that $g\notin\overline{Y}^{2}$ whenever $g\notin\overline{Y}$.
\end{lemma}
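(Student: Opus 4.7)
The plan is to prove the contrapositive: if $g\in C(S)\cap\overline{Y}^{2}$, then $g\in Y$ (note $\overline{Y}=Y$ since $Y$ is uniformly closed). I would smooth $g$ by convolving over the compact group $\U$ against a continuous approximate identity concentrated near the identity, producing intermediate functions that lie in $Y$ by $\U$-invariance and closure, and that collectively approach $g$ uniformly.

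Choose a sequence $\phi_k\in C(\U)$ of nonnegative functions with $\int_\U\phi_k\,dU=1$ whose supports shrink to $\{I\}\subset\U$, and for $f\in C(S)$ and $z\in S$ set
$$(P_kf)(z)=\int_\U\phi_k(U)\,f(Uz)\,dU.$$
Invoking Lemma \ref{U to X cont}, the map $U\mapsto f\circ U$ is continuous from $\U$ into $C(S)$, so $P_kf$ is a Bochner integral in $C(S)$ and in particular belongs to $C(S)$. If $f\in Y$, then each integrand value $\phi_k(U)(f\circ U)$ lies in $Y$ by $\U$-invariance and linearity; since $Y$ is a closed subspace of $C(S)$, the integral $P_kf$ also lies in $Y$.

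Now pick any $\{f_n\}\subset Y$ with $\n{f_n-g}_2\to 0$. For each $z\in S$, Cauchy--Schwarz on $(\U,dU)$, followed by Proposition \ref{prop147} applied to $|f_n-g|^{2}$, yields
$$|P_kf_n(z)-P_kg(z)|\leq\left(\int_\U\phi_k(U)^{2}\,dU\right)^{1/2}\n{f_n-g}_2,$$
a bound independent of $z$. Hence $P_kf_n\to P_kg$ uniformly in $n$, and since each $P_kf_n\in Y$ with $Y$ uniformly closed, $P_kg\in Y$ for every $k$.

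Finally, uniform continuity of $g$ on the compact sphere together with the shrinking supports of the $\phi_k$ forces $P_kg\to g$ uniformly as $k\to\infty$, so $g\in Y$. The main obstacle — and the place where rotation invariance is essential — is the Cauchy--Schwarz estimate above: it is precisely Proposition \ref{prop147} that collapses the pointwise $L^{2}(\U,dU)$-norm of $(f_n-g)(U\cdot)$ into the constant $\n{f_n-g}_2$, and this is what allows an $L^{2}$-convergent sequence in $Y$ to be upgraded, after smoothing, to a uniformly convergent one.
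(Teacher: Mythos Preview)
Your proof is correct. The paper does not give its own proof of this lemma (it is cited from \cite{RFT}), but it does prove the analogous $L^\infty$ statement, Lemma~\ref{main lemma}, in Section~\ref{proof main lemma}, and that argument is modeled on Rudin's. Comparing with that proof: the paper argues the direct implication via a separating functional---starting from $g\notin\overline{Y}$ one takes $h$ annihilating $Y$ with $\int gh\,d\sigma=1$, then averages the functionals $F\mapsto\int (F\circ U)h\,d\sigma$ against a bump $\psi$ on $\U$ to manufacture an $L^2$-bounded functional that still separates $g$ from $Y$. You instead prove the contrapositive by smoothing $g$ itself: your operators $P_k$ are $L^2\to C(S)$ bounded (the same Cauchy--Schwarz plus Proposition~\ref{prop147} step), map $Y$ into $Y$, and form an approximate identity on $C(S)$. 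The two arguments are dual to one another and use identical technical ingredients (Lemma~\ref{U to X cont}, a bump on $\U$, Schwarz, Proposition~\ref{prop147}); your version is arguably more transparent in that it exhibits the approximants in $Y$ explicitly rather than passing through a Hahn--Banach separation.
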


\begin{lemma}[\cite{RFT} Lemma 12.3.3]\label{U to X cont}
Let $X$ be any of the spaces $C(S)$ or $L^p(S)$ for $1\leq p<\infty$. If $f\in X$, then $U\mapsto f\circ U$ is a continuous map of $\U$ into $X$.
\end{lemma}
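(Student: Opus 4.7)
The plan is to handle $C(S)$ first via uniform continuity, then reduce the $L^p$ case to the $C(S)$ case via density and the rotation-invariance of $\sigma$ noted in Remark \ref{int U-invariant}.

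First I would treat $X = C(S)$. Since $S$ is compact and $f$ is continuous, $f$ is uniformly continuous on $S$: given $\varepsilon>0$, choose $\delta>0$ so that $\abs{f(z)-f(w)}<\varepsilon$ whenever $z,w\in S$ with $\abs{z-w}<\delta$. If $U,V\in\U$ satisfy $\|U-V\|<\delta$ in the operator norm, then for every $z\in S$ we have $\abs{Uz-Vz}\leq\|U-V\|<\delta$, hence $\abs{f(Uz)-f(Vz)}<\varepsilon$. Taking the supremum in $z$ gives $\n{f\circ U-f\circ V}_\infty\leq\varepsilon$, which establishes continuity in this case.

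Next I would treat $X = L^p(S)$ for $1\leq p<\infty$ by a standard three-epsilon argument. Given $f\in L^p(S)$ and $\varepsilon>0$, use the density of $C(S)$ in $L^p(S)$ to pick $g\in C(S)$ with $\n{f-g}_p<\varepsilon/3$. By Remark \ref{int U-invariant}, $\sigma$ is rotation-invariant, so for any $W\in\U$,
\[
\n{(f-g)\circ W}_p^p=\int_S\abs{f-g}^p\circ W\,d\sigma=\int_S\abs{f-g}^p\,d\sigma=\n{f-g}_p^p.
\]
Hence for any $U,V\in\U$,
\[
\n{f\circ U-f\circ V}_p\leq\n{(f-g)\circ U}_p+\n{g\circ U-g\circ V}_p+\n{(g-f)\circ V}_p\leq\tfrac{2\varepsilon}{3}+\n{g\circ U-g\circ V}_p.
\]
Since $\sigma(S)=1$, we have $\n{h}_p\leq\n{h}_\infty$ for every $h\in C(S)$, so by the $C(S)$ case there exists $\delta>0$ such that $\n{g\circ U-g\circ V}_p\leq\n{g\circ U-g\circ V}_\infty<\varepsilon/3$ whenever $\|U-V\|<\delta$. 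Combining yields $\n{f\circ U-f\circ V}_p<\varepsilon$ for such $U,V$, which is the desired continuity.

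I do not expect any serious obstacle here: the $C(S)$ case is immediate from compactness of $S$, and the $L^p$ case is a textbook density reduction whose only subtlety is observing that rotation-invariance of $\sigma$ makes the operators $h\mapsto h\circ W$ isometries of $L^p(S)$, so the approximation error $\n{f-g}_p$ is preserved when we compose with $U$ or $V$. The whole argument is uniform in the choice of $X$ up to the obvious replacement of $\n{\cdot}_\infty$ by $\n{\cdot}_p$.
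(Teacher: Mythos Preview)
Your argument is correct, but note that the paper does not actually supply a proof of this lemma: it is quoted verbatim from \cite{RFT} (Lemma 12.3.3) and used as a black box. So there is no ``paper's own proof'' to compare against. That said, your proof is exactly the standard one (and is essentially what Rudin gives): uniform continuity handles $C(S)$ directly, and the $L^p$ case follows by density of $C(S)$ together with the fact that composition with a unitary is an $L^p$-isometry (your appeal to Remark \ref{int U-invariant}). There are no gaps.
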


The characterization of the closed $\U$-invariant subalgebras of $C(S)$ is dependent on the dimension $n$. The case when $n=1$ is simple and is covered in \cite{NR}: Let $\Omega$ be any additive semigroup of integers. The set of continuous functions on $S$ whose Fourier coefficients vanish on the complement of $\Omega$ is a closed $\U$-invariant algebra, and there are no others.

In \cite{RUA}, Rudin develops a combinatorial criterion that describes the sets $\Omega\subset Q$ (called \textbf{algebra patterns}) which induce closed $\U$-invariant subalgebras of $C(S)$ when $n\geq 3$.

\begin{theorem}[\cite{RUA} Theorem 1]\label{criterion}
The following property of a set $\Omega\subset Q$ implies $\Omega$ is an algebra pattern: If $(p,q),(r,s)\in\Omega$ and $$\mu=\min\{p+q,r+s,p+r,q+s\},$$ then $\Omega$ contains all points $(p+r+j,q+s+j)$ with $0\leq j\leq\mu$. Conversely, every algebra pattern has this property when $n\geq 3$.
\end{theorem}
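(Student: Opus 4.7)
The plan is to translate the algebra property for $\overline{E}_\Omega$ into a multiplicative closure condition on the indexing set $\Omega$, by means of the classical multiplication formula for bi-homogeneous harmonics on the sphere. Specifically, the key fact I would use is that for $f\in H(p,q)$ and $g\in H(r,s)$, the restriction $fg|_S$ lies in the algebraic sum of the spaces $H(p+r+j,q+s+j)$ for $0\leq j\leq\mu$, where $\mu=\min\{p+q,r+s,p+r,q+s\}$. This decomposition comes from writing $fg$ as a polynomial of bi-degree $(p+r,q+s)$ on $\C^n$, expanding via the Fischer (harmonic) decomposition, and using $|z|^2=1$ on $S$; the extra constraints $j\leq p+q$ and $j\leq r+s$ reflect that $f$ and $g$ are themselves harmonic and thus reduce the range of $j$ that can occur in the expansion of their product.

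For the sufficiency direction, assume $\Omega$ has the stated closure property. I would first reduce to showing that $E_\Omega$ is closed under pointwise multiplication, since the uniform norm is submultiplicative on $C(S)$ and hence $\overline{E}_\Omega$ is an algebra whenever $E_\Omega$ is. By bilinearity it suffices to treat basis elements $f\in H(p,q)$, $g\in H(r,s)$ with $(p,q),(r,s)\in\Omega$; the multiplication formula then writes $fg$ as a finite sum of members of $H$-spaces whose indices are, by hypothesis, in $\Omega$, so $fg\in E_\Omega$ as required.

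For the converse with $n\geq 3$, assume $\Omega$ is an algebra pattern, fix $(p,q),(r,s)\in\Omega$ and $0\leq j\leq\mu$, and argue by contrapositive. If the index in question were absent from $\Omega$, then by Theorem \ref{main C and Lp result} together with Remark \ref{E Omega L2 pi pq}, the orthogonal projection $\pi$ onto the corresponding $H$-space annihilates $\overline{E}_\Omega$ in $L^2(S)$, so $\pi(fg)=0$ for every $f\in H(p,q)$ and $g\in H(r,s)$. The main obstacle is to contradict this by producing an explicit pair $(f,g)$ with $\pi(fg)\neq 0$. I would attempt this using zonal harmonics $K_z\in H(p,q)$ and $K_w\in H(r,s)$ with $z,w\in S$ chosen in general position, and test against $K_\zeta$ at a third base point $\zeta\in S$; the nonvanishing reduces to a polynomial identity in the inner products $\inpd{z}{w}$, $\inpd{z}{\zeta}$, $\inpd{w}{\zeta}$, whose nondegeneracy requires $\zeta$ to have a genuinely independent component outside $\text{span}(z,w)$. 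This is precisely where the hypothesis $n\geq 3$ enters, since three such points of $S$ in sufficiently general position only exist in dimension at least three; in lower dimensions the forced relations among these inner products introduce cancellations that prevent the full range $0\leq j\leq\mu$ from being witnessed by a single product.
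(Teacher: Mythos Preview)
The paper does not prove Theorem~\ref{criterion}. It is stated in the preliminaries (Section~2.3) as a background result, with the citation ``\cite{RUA} Theorem 1'' attached, and the surrounding text explicitly says that Rudin develops this criterion in \cite{RUA}. The present paper only \emph{uses} the theorem (via Theorem~\ref{algebra patterns}) to transfer Rudin's structural results about $C(S)$-algebra patterns to $L^\infty(S)$-algebra patterns. So there is nothing in the paper to compare your proposal against.

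That said, your outline is in the right spirit for how \cite{RUA} actually proceeds. The sufficiency direction is exactly as you describe: the decomposition
\[
H(p,q)\cdot H(r,s)\subset \sum_{0\le j\le \mu} H(p+r-j,\,q+s-j)
\]
(note the sign: the indices go \emph{down}, not up, since one peels off factors of $|z|^2$) makes $E_\Omega$ multiplicatively closed under the stated hypothesis, and the uniform closure inherits this. For the converse when $n\ge 3$, the substantive content is that \emph{every} index in the range $0\le j\le\mu$ is actually hit; your idea of testing with zonal functions and exploiting a third independent direction in $\C^n$ is indeed the mechanism Rudin uses, though the computation that makes this precise is more delicate than your sketch indicates. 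If you intend to fill this in, be careful with the sign convention and be prepared for a nontrivial explicit calculation of the projection coefficients.
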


The case $n=2$ has some exceptions which Rudin also addresses in \cite{RUA}. Theorem \ref{criterion} can also be formulated in terms of the spaces in the following definition.

\begin{definition}[\cite{RUA} Definition 1.5]\label{H(p,q)H(r,s)}
Given $(p,q),(r,s)\in Q$, we define $H(p,q)\cdot H(r,s)$ to be the linear span of products $fg$, where $f\in H(p,q)$ and $g\in H(r,s)$.
\end{definition}

Theorem \ref{criterion} then yields several examples of algebra patterns, namely the sets described in Definition \ref{Rudin alg patts}. As in \cite{RUA}, we define $\Delta_k$ to be the set $\{(p,q)\in Q:p-q=k\}$ for each integer $k$.

\begin{definition}[\cite{RUA} Definition 1.6]\label{Rudin alg patts}
Let $d$ be any integer, $\Sigma$ any (possibly empty) additive semigroup of positive integers, and $(p,q)\in Q$ such that $0\leq q<p$. We define
\begin{itemize}
    \item[(1)] $\displaystyle G(d)$ to be the union of all $\Delta_{kd}$, for $-\infty<k<\infty$,
    \item[(2)] $\displaystyle G(\Sigma)$ to be the union of $\Delta_0$ and all $\Delta_k$ for $k\in\Sigma$, and
    \item[(3)] $G(p,q)$ to be the set consisting of $(p,q)$ and points $$(mp-j,mq-j),$$ where $m=2,3,4,\ldots$, and $0\leq j\leq mq$.
\end{itemize}
\end{definition}

The following theorems of \cite{RUA} show the importance of these algebra patterns. The first two results are also in \cite{NR} and describe every self-adjoint closed $\U$-invariant algebra of continuous functions on $S$ (with $\Sigma$ empty).

\begin{theorem}[\cite{RUA} Theorem 2]\label{RUA Thm 2}
If $\Omega$ is an algebra pattern that contains some $(p,q)$ with $p>q$ and some $(r,s)$ with $r<s$, then $\Omega=G(d)$ for some $d>0$.
\end{theorem}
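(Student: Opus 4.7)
The plan is to extract from the algebra-pattern criterion (Theorem \ref{criterion}) two structural facts that together force $\Omega = G(d)$: (a) the set of allowed differences $D := \{p - q : (p, q) \in \Omega\} \subset \Z$ is a subgroup $d\Z$ for some $d > 0$; and (b) every lattice point of $Q$ whose difference is a multiple of $d$ already lies in $\Omega$.

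For (a), I would apply the criterion with $j = 0$ to see that $(p, q), (r, s) \in \Omega$ implies $(p + r, q + s) \in \Omega$, so $D$ is closed under addition in $\Z$. The hypothesis places both a positive element $p - q$ and a negative element $r - s$ in $D$. Letting $d := \min(D \cap \Z_{>0})$ and $-e := \max(D \cap \Z_{<0})$, the sum $d + (-e) \in D$ can be neither a positive integer smaller than $d$ nor a negative integer larger than $-e$, which forces $d = e$. A Euclidean reduction modulo $d$ --- given $m \in D$, iteratively add $-d \in D$ to reduce, using minimality of $d$ to rule out positive remainders --- then shows every element of $D$ is a multiple of $d$, so $D = d\Z$ and $\Omega \subset G(d)$.

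For (b), I would fill in each diagonal $\Delta_{kd}$ separately. Pairing some $(p, q) \in \Omega$ of difference $+d$ with some $(r, s) \in \Omega$ of difference $-d$ produces via the criterion a point $(a, a) \in \Omega$ with $a \geq 1$. Iterating the criterion on pairs of diagonal points, starting from $((a, a), (a, a))$ and inducting on the size of the diagonal points already in hand, fills in all of $\Delta_0$. Then for each nonzero $k$, I would fix some $(p, q) \in \Omega$ with $p - q = kd$ and successively pair it with $(m, m) \in \Delta_0 \cap \Omega$ for varying $m$; the criterion produces diagonal segments in $\Delta_{kd}$ that, by letting $m$ range, overlap to sweep across the entire diagonal. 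The principal technical obstacle is this sweep: I must verify that as $m$ varies, the upper bound $\mu = \min\{p+q, 2m, p+m, q+m\}$ provides enough overlap to reach every lattice point of $\Delta_{kd}$, including the extremal points $(kd, 0)$ when $k > 0$ and $(0, -kd)$ when $k < 0$. Once both inclusions are established, $\Omega = G(d)$ with $d > 0$ follows.
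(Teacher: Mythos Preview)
This paper does not prove Theorem \ref{RUA Thm 2}; it is quoted as a result of Rudin from \cite{RUA}, with no argument given here. So there is no in-paper proof to compare your proposal against.

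Judged on its own, your plan is sound and is essentially the natural argument one would extract from the combinatorial criterion (Theorem \ref{criterion}): closure of the difference set $D$ under addition (the $j=0$ case) together with the presence of elements of both signs forces $D=d\Z$; pairing a point of difference $+d$ with one of difference $-d$ produces a point $(a,a)$ with $a\geq 1$; iterating the criterion on $\Delta_0$ then yields all $(m,m)$; and pairing a fixed $(p,q)\in\Delta_{kd}\cap\Omega$ with $(m,m)$ for $m\geq p$ gives, since $\mu=p+q$ in that range, the segment $\{(x,x-kd):m-q\leq x\leq p+m\}$, whose union over $m\geq p$ is all of $\Delta_{kd}$. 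The extremal point $(kd,0)$ is already hit at $m=p$, so the sweep you flag as the main obstacle does close up.

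One genuine caveat: every step of your argument invokes the \emph{converse} direction of Theorem \ref{criterion} --- that an algebra pattern necessarily has the stated combinatorial closure property --- and that converse is recorded here only for $n\geq 3$. For $n=2$ there exist algebra patterns (the $G^*(\Sigma)$ of Theorem \ref{RUA Thm 3}, for instance) that violate the criterion as stated, so your diagonal-filling step cannot be used verbatim in that dimension. Since Theorem \ref{RUA Thm 2} is asserted in \cite{RUA} without a dimensional restriction, Rudin presumably handles $n=2$ by a closer look at which components of $H(p,q)\cdot H(r,s)$ actually occur there; you should either restrict your proof to $n\geq 3$ or supply that extra analysis.
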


\begin{theorem}[\cite{RUA} Theorem 3]\label{RUA Thm 3}
Suppose $\Omega$ is an algebra pattern such that $p\geq q$ for every $(p,q)\in\Omega$ and $(a,a)\in\Omega$ for some $a>0$.
\begin{itemize}
    \item[(1)] If $n\geq 3$, then $\Omega=G(\Sigma)$.
    \item[(2)] If $n=2$ and if $G^*(\Sigma)$ is obtained from $G(\Sigma)$ by deleting all $(m,m)$ with $m$ odd, then $G^*(\Sigma)$ is also an algebra pattern, and $\Omega$ is either $G(\Sigma)$ or $G^*(\Sigma)$.
\end{itemize}
\end{theorem}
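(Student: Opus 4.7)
The plan is to apply the combinatorial criterion of Theorem \ref{criterion} iteratively, building the structure of $\Omega$ outward from the given element $(a,a)$. Because the criterion is both necessary and sufficient for $n\geq 3$, the proof in that case reduces to a purely combinatorial argument about the subset $\Omega\subset Q$; the case $n=2$ requires separate attention, since the criterion is only sufficient there.

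For $n\geq 3$, I would proceed in three steps. First, I would show $\Delta_0\subset\Omega$: starting from $(a,a)\in\Omega$ and applying the criterion iteratively to the diagonal pairs that arise, the collection of diagonal elements of $\Omega$ grows until it exhausts $\Delta_0$. Second, for any $(p,q)\in\Omega$ with $p>q$, the criterion applied to $(p,q)$ paired with diagonal elements $(\ell,\ell)\in\Delta_0$ of sufficient size produces every point of $\Delta_{p-q}$, so $\Omega$ is a union of complete diagonals. Setting $\Sigma=\{k>0:\Delta_k\subset\Omega\}$, this gives $\Omega=G(\Sigma)$. Third, the criterion applied to $(k_1,0),(k_2,0)\in\Omega$ for $k_1,k_2\in\Sigma$ produces $(k_1+k_2,0)\in\Omega$, confirming that $\Sigma$ is closed under addition, and hence is an additive semigroup of positive integers.

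For $n=2$, the criterion is only sufficient, and the iteration above may fail to recover all diagonal points; I expect this to be the main obstacle. To address it, I would work directly with the decomposition of products $H(p,q)\cdot H(r,s)$ in $\C^2$, which exhibits a parity phenomenon permitting the omission of certain $(m,m)$ with $m$ odd. Computing the relevant low-degree products explicitly, I would verify that $G^*(\Sigma)$ is itself an algebra pattern, and then by parity tracking along the diagonal argue that any $\Omega$ satisfying the hypotheses must be either $G(\Sigma)$ or $G^*(\Sigma)$.
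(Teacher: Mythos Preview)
The paper does not prove this statement: Theorem~\ref{RUA Thm 3} is quoted from \cite{RUA} as background in the preliminaries section (Section~2.3) and is used only as an input to the $L^\infty$ analogues in Section~\ref{applications}. There is therefore no proof in the paper to compare your proposal against.

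That said, your outline for $n\geq 3$ is sound and is essentially the argument one finds in \cite{RUA}. A small point worth making explicit in your Step~1: applying the criterion to $(a,a)$ and $(a,a)$ with $\mu=2a$ immediately yields every $(k,k)$ with $0\leq k\leq 2a$, in particular $(1,1)$; from there you can step upward along $\Delta_0$ by pairing $(1,1)$ with $(m,m)$. In Step~2, pairing $(p,q)$ with $(q,q)$ (take $\mu=2q$) reaches all the way down to $(p-q,0)$, which is what gets you the bottom of the diagonal $\Delta_{p-q}$; pairing with large $(\ell,\ell)$ gets the rest. Your Step~3 is immediate.

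For $n=2$, your plan to compute the decompositions $H(p,q)\cdot H(r,s)$ directly is also what Rudin does; the parity obstruction appears because in $\C^2$ certain summands that the criterion would predict are in fact absent from these products. You should expect this part to require genuine computation rather than the pure combinatorics of the $n\geq 3$ case, and the verification that $G^*(\Sigma)$ is an algebra pattern is the crux.
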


\begin{theorem}[\cite{RUA} Theorem 4]\label{RUA Thm 4}
Fix $(p,q)\in Q$ with $p>q$. Let $\Omega(p,q)$ be the smallest algebra pattern that contains $(p,q)$.
\begin{itemize}
    \item[(1)] If $n\geq 3$, then $\Omega(p,q)=G(p,q)$.
    \item[(2)] If $n=2$, then $\Omega(p,q)$ is obtained from $G(p,q)$ by deleting all $(mp-1,mq-1)$ for $m=2,3,4,\ldots$, and all $(2p-j,2q-j)$ for $j$ odd.
\end{itemize}
\end{theorem}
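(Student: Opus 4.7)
The plan is to prove part (1) by establishing both containments $G(p,q) \subseteq \Omega(p,q)$ and $\Omega(p,q) \subseteq G(p,q)$, exploiting that Theorem \ref{criterion} gives a biconditional characterization of algebra patterns when $n \geq 3$. Part (2) is more delicate because the criterion is only sufficient in dimension two, so identifying the correct algebra pattern requires additional input about how the spaces $H(p,q)$ multiply in two complex variables.

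For $\Omega(p,q) \supseteq G(p,q)$ in part (1), I would induct on the level $m \geq 1$. The base case $m=1$ is immediate from the hypothesis $(p,q) \in \Omega(p,q)$. For the inductive step, assuming the point $((m-1)p,(m-1)q)$ is already in $\Omega(p,q)$, I would apply Theorem \ref{criterion} to the pair $(p,q)$ and $((m-1)p,(m-1)q)$; a direct calculation shows that the resulting sequence of points is exactly the $m$-th level of $G(p,q)$, namely $(mp-j, mq-j)$ for $0 \leq j \leq mq$. For the reverse containment, I would verify that $G(p,q)$ itself satisfies the criterion. Taking two representative points $(m_1 p - i_1, m_1 q - i_1)$ and $(m_2 p - i_2, m_2 q - i_2)$ with $0 \leq i_k \leq m_k q$, their sum lies at level $M = m_1 + m_2$ with shift $I_0 = i_1 + i_2$, and the admissible range of shifts given by the criterion stays within $0 \leq I \leq M q$, keeping the new points inside $G(p,q)$. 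Minimality of $\Omega(p,q)$ then forces equality.

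For part (2), I would split the argument into an upper bound and a lower bound. The upper bound amounts to checking that the subset of $G(p,q)$ described in (2) is still an algebra pattern, which I can do via the sufficient direction of Theorem \ref{criterion} since that direction holds in all dimensions: one verifies that no sum of two non-deleted points, combined with any admissible shift, ever produces a deleted point, so the set is closed under the criterion. The lower bound, showing that the deleted points truly cannot belong to any algebra pattern containing $(p,q)$ when $n=2$, is more demanding. This requires a direct computation showing that the product $H(p,q) \cdot H(p,q)$ on the sphere $S \subset \C^2$ is missing the summands $H(2p-j, 2q-j)$ for odd $j$, and that the products $H(p,q) \cdot H((m-1)p, (m-1)q)$ are missing the summand $H(mp-1, mq-1)$; these vanishings reflect the fact that the irreducible representations of $U(2)$ are indexed by a single highest weight, producing cancellations absent in higher rank.

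The main obstacle is the lower-bound direction of part (2), which requires the precise irreducible decomposition of $H(p,q) \cdot H(r,s)$ as a $U(2)$-module. One approach is to realize $H(p,q)$ on $S \subset \C^2$ via matrix entries of irreducible $SU(2)$ representations and then compute the relevant Clebsch--Gordan type decomposition, where the failure of certain components to appear manifests as a combinatorial identity between highest weights. Once this representation-theoretic input is in hand, the combinatorial bookkeeping for part (2) runs in close parallel to that of part (1).
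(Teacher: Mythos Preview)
The paper does not prove this theorem. Theorem~\ref{RUA Thm 4} is stated in the preliminaries section as a quotation of Theorem~4 from \cite{RUA}; no proof is given or sketched in the present paper, which only \emph{uses} the result (via Theorem~\ref{algebra patterns}) to obtain the $L^\infty$ analogue stated as Theorem~\ref{smallest Linf alg pattern}. So there is no proof here against which your proposal can be compared.

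That said, a brief comment on your sketch. Your strategy for part~(1) is the natural one, and the direction $\Omega(p,q)\subseteq G(p,q)$ (checking that $G(p,q)$ satisfies the criterion of Theorem~\ref{criterion}) is fine. For the direction $G(p,q)\subseteq\Omega(p,q)$, however, your inductive step as written is not quite enough: applying the criterion to $(p,q)$ and $((m-1)p,(m-1)q)$ yields the points $(mp-j,mq-j)$ only for $0\le j\le\mu$ with $\mu=\min\{p+q,(m-1)(p+q),mp,mq\}$, and for $m$ large enough that $(m-1)q>p$ this minimum is $p+q$, not $mq$. To reach the full range $0\le j\le mq$ you will need to combine $(p,q)$ with \emph{all} previously obtained level-$(m-1)$ points $((m-1)p-i,(m-1)q-i)$, not just the top one, or else organize the induction differently. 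For part~(2), your diagnosis is correct: the lower bound genuinely requires the explicit decomposition of $H(p,q)\cdot H(r,s)$ in $\C^2$, which is where the special $n=2$ phenomena enter; this computation is carried out in \cite{RUA} and is not reproduced in the present paper.
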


Regarding the closed $\M$-invariant subspaces of $C(S)$, Nagel and Rudin show in \cite{NR} (Theorem B) that only six such spaces exist:

\begin{itemize}
\item[1.] the null space $\{0\}$,
\item[2.] the space of constant functions $\C$,
\item[3.] the space $A(S)$,
\item[4.] the space $\text{Conj}(A(S))$,
\item[5.] the closure of the space $A(S)+\text{Conj}(A(S))$, and
\item[6.] the space $C(S)$.
\end{itemize}

This is clear from Theorem \ref{main C and Lp result} and the following lemma.

\begin{lemma}[\cite{NR} Lemma 5.2]\label{M C(S) lemma}
Suppose $Y$ is a closed $\M$-invariant subspace of $C(S)$, $p\geq 1$ an integer, and $H(p,q)\subset Y$ for some $q\geq 0$. Then $H(p-1,q)\subset Y$ and $H(p+1,q)\subset Y$.
\end{lemma}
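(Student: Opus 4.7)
The plan is as follows. Since $\U\subset\M$, the $\M$-invariance of $Y$ implies $Y$ is $\U$-invariant, so by Theorem \ref{main C and Lp result} we have $Y=\overline{E}_\Omega^{C(S)}$ for some $\Omega\subset Q$ with $(p,q)\in\Omega$. The conclusion is equivalent to $(p-1,q),(p+1,q)\in\Omega$. Each projection $\pi_{rs}$ is continuous on $C(S)$ and sends $E_\Omega$ into itself (on each $H(a,b)$-summand it is either the identity or zero), so by continuity $\pi_{rs}(Y)\subset Y$; by $\U$-minimality of $H(p\pm 1,q)$ (Theorem \ref{L2 Hpq structure}(b)), it suffices to produce a single nonzero element of $H(p\pm 1,q)\cap Y$ of the form $\pi_{p\pm 1,q}(h)$ for some $h\in Y$.

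To construct such an $h$, I would take $f=z_1^p\bar z_2^q$, which is harmonic bihomogeneous of bidegree $(p,q)$ and therefore lies in $H(p,q)\subset Y$ (assuming $n\geq 2$; the case $n=1$ forces $q=0$ and the same argument runs with $f=z^p$). Let $\phi_a$ denote the standard involutive automorphism of $B$ with $\phi_a(0)=a$, and consider the one-parameter family $\phi_{te_1}$ for $t\in(-1,1)$. A direct calculation from the explicit formula for $\phi_a$ gives
\[
f\circ\phi_{te_1}(z)=\frac{(-1)^q(1-t^2)^{q/2}(t-z_1)^p\bar z_2^q}{(1-tz_1)^p(1-t\bar z_1)^q}.
\]
Since $\phi_{te_1}\in\M$ and $Y$ is closed, $f\circ\phi_{te_1}\in Y$ for every $t$, and the uniform limit of the difference quotients $(f\circ\phi_{te_1}-f\circ\phi_0)/t$ as $t\to 0$ lies in $Y$ as well; expanding to first order yields
\[
h:=\left.\tfrac{d}{dt}\right|_{t=0}(f\circ\phi_{te_1})=(-1)^{p+q}\bigl[-pz_1^{p-1}+pz_1^{p+1}+qz_1^p\bar z_1\bigr]\bar z_2^q\in Y.
\]

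The remaining task is to project $h$. The summand $z_1^{p+1}\bar z_2^q$ is harmonic and bihomogeneous of bidegree $(p+1,q)$, while the summands $z_1^{p-1}\bar z_2^q$ and $z_1^p\bar z_1\bar z_2^q$ have bidegrees $(p-1,q)$ and $(p,q+1)$ respectively, so $\pi_{p+1,q}(h)=(-1)^{p+q}pz_1^{p+1}\bar z_2^q\neq 0$. For $\pi_{p-1,q}(h)$ the only delicate contribution comes from $z_1^p\bar z_1\bar z_2^q$, which is not harmonic but decomposes on $\C^n$ as $h_{p,q+1}+|z|^2h_{p-1,q}$; applying $\tilde\Delta=\sum_j\partial_{z_j}\partial_{\bar z_j}$ one computes $\tilde\Delta(z_1^p\bar z_1\bar z_2^q)=pz_1^{p-1}\bar z_2^q$ and $\tilde\Delta(|z|^2h_{p-1,q})=(n+p+q-1)h_{p-1,q}$, so $h_{p-1,q}=\frac{p}{n+p+q-1}z_1^{p-1}\bar z_2^q$. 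Combining these yields
\[
\pi_{p-1,q}(h)=-(-1)^{p+q}\cdot\frac{p(n+p-1)}{n+p+q-1}\,z_1^{p-1}\bar z_2^q\neq 0.
\]
The main obstacle is this arithmetic verification: the $-pz_1^{p-1}\bar z_2^q$ contribution from the first summand of $h$ could in principle be cancelled by the $\frac{pq}{n+p+q-1}z_1^{p-1}\bar z_2^q$ contribution from the third, and one must track both terms to see that the combined coefficient reduces to the nonzero rational expression above, relying on the elementary inequality $n+p-1\neq 0$ for $n,p\geq 1$.
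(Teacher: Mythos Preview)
The paper does not supply its own proof of this lemma; it is quoted from \cite{NR} (Lemma 5.2) and used as a black box in the proof of Lemma \ref{main M lemma}. So there is no in-paper argument to compare against.

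Your argument is correct and is in fact the standard one: differentiate $f\circ\phi_{te_1}$ at $t=0$ to produce an element of $Y$ with nonzero $H(p\pm1,q)$-components, then invoke $\U$-minimality. The computations check out. The decomposition $z_1^p\bar z_1\bar z_2^q=h_{p,q+1}+|z|^2h_{p-1,q}$ really does terminate after two terms, since $\tilde\Delta(z_1^p\bar z_1\bar z_2^q)=pz_1^{p-1}\bar z_2^q$ is already harmonic; this forces all higher $|z|^{2j}$-pieces to vanish by uniqueness of the harmonic decomposition, so your formula for $h_{p-1,q}$ is justified. The uniform convergence of the difference quotient follows from the joint real-analyticity of $(t,z)\mapsto f\circ\phi_{te_1}(z)$ on $(-1,1)\times S$.

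One small caveat on the $n=1$ remark: the hypothesis $H(p,q)\subset Y$ does not literally force $q=0$ when $n=1$, because $H(p,q)=\{0\}$ for $p,q\geq1$ and the containment is then vacuous. In that degenerate situation the conclusion can fail at $p=1$ (one would need $H(0,q)\subset Y$, which is not automatic). This is harmless in practice, since every application of the lemma starts from a pair $(p,q)$ with $H(p,q)\neq\{0\}$, but you should state the nontriviality assumption explicitly rather than claim it is forced.
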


The closed $\M$-invariant subalgebras of $C(S)$ are those spaces above that are also algebras. Observe that in general all are algebras except for the closure of $A(S)+\text{Conj}(A(S))$. This is an algebra only when $n=1$.

\section{Closures of \texorpdfstring{$\U$}{U}-Invariant Sets}\label{Uinv Closures}

In this section we verify that $\U$-invariance is preserved by closures in the spaces $C(S)$ and $L^p(S)$ (Corollaries \ref{Lp and C U-inv closure} and \ref{Linf U-inv closure}). We prove this by showing that $\U$ induces a class of $L^p$-isometries on $L^p(S)$ and a class of isometries on $C(S)$ (Theorem \ref{Phi U norm continuous}), as well as a class of weak*-homeomorphisms on $L^\infty(S)$ (Theorem \ref{weak* cont}).

The following is a consequence of Remark \ref{int U-invariant}. Observe the case when $p=\infty$ follows from the fact that $\sigma$ is rotation-invariant.

\begin{remark}\label{Lp U-invariant}
The space $L^p(S)$ is $\U$-invariant, for $1\leq p\leq\infty$.
\end{remark}

\begin{theorem}\label{Phi U norm continuous}
Suppose $X$ is any of the spaces $C(S)$ or $L^p(S)$ for $1\leq p\leq\infty$ and $U\in\U$. If $\Phi_U:X\to X$ is the map given by $\Phi_U(f)=f\circ U$, then $\Phi_U$ is a bijective linear isometry.
\end{theorem}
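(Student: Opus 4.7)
The plan is to verify each of the four properties in order: well-definedness, linearity, isometry, and bijectivity, with most of the work needed only for the isometry claim (split by the three cases $C(S)$, $L^p(S)$ with $p<\infty$, and $L^\infty(S)$).

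First, I would check that $\Phi_U$ maps $X$ into $X$. For $X=C(S)$ this is immediate since $U$ restricts to a continuous map of $S$ to $S$ and composition of continuous maps is continuous. For $X=L^p(S)$ this is Remark \ref{Lp U-invariant}; one should also note $\Phi_U$ is well defined on equivalence classes because $\sigma$ is rotation-invariant, so $U$ sends $\sigma$-null sets to $\sigma$-null sets, whence $f=g$ a.e.\ implies $f\circ U=g\circ U$ a.e. Linearity is then immediate from the pointwise identity $(\alpha f+\beta g)\circ U=\alpha(f\circ U)+\beta(g\circ U)$.

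For the isometry claim I would handle the three cases separately. When $X=C(S)$, since $U$ maps $S$ bijectively onto $S$, taking a supremum over $z\in S$ of $|f(Uz)|$ gives the same value as taking a supremum over $w=Uz\in S$ of $|f(w)|$, so $\|f\circ U\|_\infty=\|f\|_\infty$. When $X=L^p(S)$ with $1\leq p<\infty$, applying Remark \ref{int U-invariant} to the function $|f|^p\in L^1(S)$ gives
\[
\|f\circ U\|_p^p=\int_S |f\circ U|^p\,d\sigma=\int_S |f|^p\,d\sigma=\|f\|_p^p.
\]
The case $X=L^\infty(S)$ is the one that needs the most care: I would unwind the essential supremum, writing
\[
\|f\circ U\|_\infty=\inf\{M\geq 0:\sigma(\{z\in S:|f(Uz)|>M\})=0\},
\]
observe that $\{z\in S:|f(Uz)|>M\}=U^{-1}(\{w\in S:|f(w)|>M\})$, and then use the rotation-invariance of $\sigma$ to see that the $\sigma$-measure of this preimage equals that of $\{w\in S:|f(w)|>M\}$. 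The infimum therefore agrees with $\|f\|_\infty$.

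Finally, since $\mathscr{U}$ is a group, $U^{-1}\in\mathscr{U}$, so $\Phi_{U^{-1}}$ is a map $X\to X$ satisfying $\Phi_U\Phi_{U^{-1}}f=f\circ U^{-1}\circ U=f$ and $\Phi_{U^{-1}}\Phi_U f=f$; this yields bijectivity with $\Phi_U^{-1}=\Phi_{U^{-1}}$. The only genuine subtlety is the $L^\infty$ essential-supremum computation (and the corresponding well-definedness on equivalence classes), both of which reduce to the observation that a rotation preserves $\sigma$-null sets.
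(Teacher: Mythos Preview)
Your proof is correct and follows essentially the same approach as the paper's. The only organizational difference is that the paper treats all $L^p(S)$ cases (including $p=\infty$) at once via rotation-invariance of $\sigma$ and then deduces the $C(S)$ case from the $L^\infty(S)$ case (since the sup and essential-sup norms agree on continuous functions), whereas you handle $C(S)$ directly and spell out the $L^\infty$ essential-supremum argument more carefully.
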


\begin{proof}
We begin with the case that $X$ is any of the spaces $L^p(S)$ for $1\leq p\leq\infty$. Remark~\ref{int U-invariant} shows that the map $\Phi_U$ is an $L^p$-isometry (hence injective), and the linearity of $\Phi_U$ is clear.

We need only show $\Phi_U$ is surjective. Observe if $f\in L^p(S)$, then $f\circ U^{-1}\in L^p(S)$ by Remark \ref{Lp U-invariant}. Thus, $$\Phi_U(f\circ U^{-1})=(f\circ U^{-1})\circ U=f,$$ so that $\Phi_U$ is surjective.

The case when $X$ is the space $C(S)$ follows from the previous case when $p=\infty$, since the $L^\infty$-norm coincides with the uniform norm on $C(S)$.
\end{proof}

\begin{corollary}\label{Lp and C U-inv closure}
Suppose $X$ is any of the spaces $C(S)$ or $L^p(S)$ for $1\leq p\leq\infty$. If $Y\subset X$ is $\U$-invariant, then $\overline{Y}^X$ is $\U$-invariant.
\end{corollary}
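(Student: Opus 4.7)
The plan is to apply Theorem \ref{Phi U norm continuous} directly, observing that $\U$-invariance of a subset $Y \subset X$ is precisely the condition that $\Phi_U(Y) \subset Y$ for every $U \in \U$.

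Fix $U \in \U$. By Theorem \ref{Phi U norm continuous}, $\Phi_U : X \to X$ is a linear isometry, hence in particular continuous with respect to the norm topology on $X$. Since $Y$ is $\U$-invariant, $\Phi_U(Y) \subset Y$. Taking norm closures in $X$ and invoking continuity of $\Phi_U$ yields
$$\Phi_U(\overline{Y}^X) \subset \overline{\Phi_U(Y)}^X \subset \overline{Y}^X.$$
Since $U \in \U$ was arbitrary, $\overline{Y}^X$ is $\U$-invariant. Equivalently, one may argue sequentially: given $f \in \overline{Y}^X$, choose $\{f_k\} \subset Y$ with $\n{f_k - f}_X \to 0$; then $\{f_k \circ U\} \subset Y$, and the isometry property gives $\n{f_k \circ U - f \circ U}_X = \n{f_k - f}_X \to 0$, so $f \circ U \in \overline{Y}^X$.

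There is no real obstacle here, since all of the work is already carried out in Theorem \ref{Phi U norm continuous}. The only subtle point worth flagging is that even when $X = L^\infty(S)$, the closure in question is the norm closure (which is metrizable), so sequential arguments are perfectly legitimate and no weak*-machinery needs to enter; the weak*-analogue of this corollary will be established separately via Theorem \ref{weak* cont}.
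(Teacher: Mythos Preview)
Your proof is correct and is exactly the argument the paper has in mind: the corollary is stated immediately after Theorem~\ref{Phi U norm continuous} with no proof given, precisely because it follows at once from the continuity (indeed, isometry) of each $\Phi_U$, just as you argue.
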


\begin{theorem}\label{weak* cont}
Let $U\in\U$. If $\Phi_U:L^\infty(S)\to L^\infty(S)$ is the map given by $\Phi_U(f)=f\circ U$, then $\Phi_U$ is a weak*-homeomorphism.
\end{theorem}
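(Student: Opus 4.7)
The plan is to exploit the duality $L^\infty(S)=L^1(S)^*$, which means a net $f_\alpha\to f$ in the weak*-topology exactly when $\int_S f_\alpha g\,d\sigma\to\int_S fg\,d\sigma$ for every $g\in L^1(S)$. By Theorem \ref{Phi U norm continuous}, $\Phi_U$ is already a bijective linear isometry of $L^\infty(S)$, with two-sided inverse $\Phi_{U^{-1}}$; so it suffices to prove weak*-continuity of $\Phi_U$, since applying the same argument to $U^{-1}$ gives weak*-continuity of the inverse.

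The key technical step is a change-of-variables identity: for every $f\in L^\infty(S)$ and $g\in L^1(S)$,
\begin{equation*}
\int_S (f\circ U)\,g\,d\sigma=\int_S f\cdot(g\circ U^{-1})\,d\sigma.
\end{equation*}
I would derive this by applying Remark \ref{int U-invariant} to the $L^1$-function $h(w)=f(Uw)g(w)$: substituting $z=Uw$ (formally, using $\int_S h\,d\sigma=\int_S h\circ U^{-1}\,d\sigma$) turns $h(U^{-1}z)=f(z)g(U^{-1}z)$ and yields the identity. The right-hand test function $g\circ U^{-1}$ belongs to $L^1(S)$ by the $L^1$ case of Theorem \ref{Phi U norm continuous}, which is exactly what is needed for it to serve as a legitimate element of the predual.

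Weak*-continuity then follows immediately: if $f_\alpha\to f$ weak*, then for any $g\in L^1(S)$,
\begin{equation*}
\int_S \Phi_U(f_\alpha)\,g\,d\sigma=\int_S f_\alpha(g\circ U^{-1})\,d\sigma\longrightarrow\int_S f(g\circ U^{-1})\,d\sigma=\int_S \Phi_U(f)\,g\,d\sigma,
\end{equation*}
using the convergence of $f_\alpha$ against the test function $g\circ U^{-1}\in L^1(S)$. Applying the same reasoning with $U$ replaced by $U^{-1}$ shows $\Phi_{U^{-1}}=(\Phi_U)^{-1}$ is weak*-continuous, and we conclude that $\Phi_U$ is a weak*-homeomorphism.

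I do not anticipate a serious obstacle. The only subtle point is the substitution identity, and it is important to keep straight that the predual shifts from $g$ to $g\circ U^{-1}$ rather than $g\circ U$; this bookkeeping is handled cleanly by rotation-invariance of $\sigma$ together with the already-established $L^1$-isometry property of $\Phi_{U^{-1}}$.
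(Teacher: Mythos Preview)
Your proposal is correct and follows essentially the same route as the paper: both hinge on the change-of-variables identity $\int_S (f\circ U)\,g\,d\sigma=\int_S f\,(g\circ U^{-1})\,d\sigma$, obtained from the rotation-invariance of $\sigma$ (Remark~\ref{int U-invariant}), which converts $\Lambda_g\circ\Phi_U$ into $\Lambda_{g\circ U^{-1}}$. The only cosmetic difference is that you phrase continuity via nets while the paper uses the initial-topology characterization (continuity of each $\Lambda_g\circ\Phi_U$); the substance is identical.
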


\begin{proof}
We recall that the weak*-topology on $L^\infty(S)$ is a weak topology induced by the maps on $L^\infty(S)$ of the form $$\Lambda_gf=\int_S fg\,d\sigma,$$ for some $g\in L^1(S)$. Thus, $\Phi_U$ is continuous with respect to the weak*-topology if and only if $\Lambda_g\circ\Phi_U$ is continuous for all maps $\Lambda_g$.

Fix $g\in L^1(S)$. We observe that $$(\Lambda_g\circ\Phi_U)(f)=\Lambda_g(f\circ U)=\int_S (f\circ U)\cdot g\,d\sigma,$$ for every $f\in L^\infty(S)$. 

By Remark \ref{Lp U-invariant}, we get that $(f\circ U)\in L^\infty(S)$, and so the function $(f\circ U)\cdot g$ is an element of $L^1(S)$. Application of Remark \ref{int U-invariant} and the fact that the inverse map $U^{-1}$ is also a unitary operator yields
\begin{equation}
\small \int_S (f\circ U)\cdot g\,d\sigma=\int_S \big[(f\circ U)\cdot g\big]\circ U^{-1} \,d\sigma=\int_S (f\circ U\circ U^{-1})\cdot (g\circ U^{-1})\,d\sigma=\int_S f\cdot (g\circ U^{-1})\,d\sigma. \label{eq:1}
\end{equation}

Since $L^1(S)$ is $\U$-invariant, we have that $(g\circ U^{-1})\in L^1(S)$. Thus, we have $$(\Lambda_g\circ\Phi_U)(f)=\int_S f\cdot (g\circ U^{-1})\,d\sigma=\Lambda_{g\circ U^{-1}}f,$$ for every $f\in L^\infty(S)$.

Since $\Lambda_g\circ\Phi_U$ is a point evaluation on $L^\infty(S)$ and hence weak*-continuous, we conclude that $\Phi_U$ is continuous on $L^\infty(S)$ with respect to the weak*-topology.

Finally, the map $\Phi_{U^{-1}}:L^\infty(S)\to L^\infty(S)$ given by $\Phi_{U^{-1}}(f)=f\circ U^{-1}$ is clearly the inverse of $\Phi_U$. By a similar argument, $\Phi_{U^{-1}}$ is continuous with respect to the weak*-topology, and thus we conclude that $\Phi_U$ is a weak*-homeomorphism.
\end{proof}

\begin{corollary}\label{Linf U-inv closure}
Suppose $Y$ is a $\U$-invariant subset of $L^\infty(S)$. Then $\overline{Y}^{*}$ is $\U$-invariant.
\end{corollary}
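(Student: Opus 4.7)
The plan is to apply Theorem \ref{weak* cont} directly. I would fix $U \in \U$ and consider the map $\Phi_U : L^\infty(S) \to L^\infty(S)$, $\Phi_U(f) = f\circ U$. By Theorem \ref{weak* cont}, this $\Phi_U$ is a weak*-homeomorphism; in particular, it is weak*-continuous.

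The key tool I would invoke is the standard topological fact that a continuous self-map $\phi$ of a topological space $X$ satisfies $\phi(\overline{A}) \subset \overline{\phi(A)}$ for every $A \subset X$. Applying this with $X = L^\infty(S)$ in the weak*-topology, $\phi = \Phi_U$, and $A = Y$, together with the hypothesis that $\Phi_U(Y) \subset Y$ (this is exactly what $\U$-invariance of $Y$ says), yields
$$\Phi_U\bigl(\overline{Y}^{*}\bigr) \subset \overline{\Phi_U(Y)}^{*} \subset \overline{Y}^{*}.$$
Since this holds for every $U \in \U$, the set $\overline{Y}^{*}$ is $\U$-invariant, which is the desired conclusion.

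There is essentially no obstacle at this stage: all of the analytic work — verifying that pre-composition with a unitary is weak*-continuous via the duality between $L^\infty(S)$ and $L^1(S)$, and using the rotation-invariance of $\sigma$ to rewrite $\Lambda_g \circ \Phi_U$ as $\Lambda_{g \circ U^{-1}}$ — was already absorbed into Theorem \ref{weak* cont}. The corollary is simply the abstract remark that invariance under a group action passes to closures in any topology in which the action is continuous.
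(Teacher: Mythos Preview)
Your proposal is correct and matches the paper's approach: the corollary is stated without proof immediately after Theorem \ref{weak* cont}, and the intended argument is precisely the one you give --- weak*-continuity of each $\Phi_U$ together with the general fact $\phi(\overline{A})\subset\overline{\phi(A)}$ passes $\U$-invariance to the weak*-closure.
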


We lastly generalize \eqref{eq:1}.

\begin{theorem}\label{switcheroo}
Let $1\leq p\leq\infty$ and let $p^\prime$ be its conjugate exponent. Then $$\int_S (f\circ U)\cdot g\,d\sigma=\int_S f\cdot (g\circ U^{-1})\,d\sigma,$$ for $f\in L^p(S)$, $g\in L^{p^\prime}(S)$, and $U\in\U$.
\end{theorem}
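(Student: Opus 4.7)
The plan is to imitate the computation (\ref{eq:1}) that appeared inside the proof of Theorem \ref{weak* cont}, but first verify that everything in sight is in $L^1(S)$ so that Remark \ref{int U-invariant} is applicable at the needed step. In the original calculation, the $L^\infty$ hypothesis on $f$ combined with $g \in L^1(S)$ made integrability immediate; in the general case I will use H\"older's inequality together with the fact, supplied by Remark \ref{Lp U-invariant} (or Theorem \ref{Phi U norm continuous}), that composition with a unitary preserves the $L^p$-class.

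First I would observe that since $f \in L^p(S)$ and $g \in L^{p'}(S)$, H\"older's inequality gives $f \cdot g \in L^1(S)$. Applying Remark \ref{Lp U-invariant} to $U$ and $U^{-1}$, we also have $f \circ U \in L^p(S)$ and $g \circ U^{-1} \in L^{p'}(S)$, so by H\"older again both of the products $(f\circ U)\cdot g$ and $f\cdot (g\circ U^{-1})$ lie in $L^1(S)$.

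Once integrability is in hand, I would apply Remark \ref{int U-invariant} to the $L^1$-function $(f\circ U)\cdot g$ using the unitary $U^{-1}$, and then simplify the resulting composition:
\begin{equation*}
\int_S (f\circ U)\cdot g \, d\sigma = \int_S \bigl[(f\circ U)\cdot g\bigr]\circ U^{-1}\, d\sigma = \int_S (f\circ U \circ U^{-1})\cdot (g\circ U^{-1})\, d\sigma = \int_S f\cdot (g\circ U^{-1})\, d\sigma.
\end{equation*}
This is the required identity.

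There is really no substantive obstacle here: the only thing to be careful about is that Remark \ref{int U-invariant} is stated for $L^1$-functions, so the H\"older step is genuinely needed to justify invoking it in the generality claimed (in particular for the boundary cases $p=1$ and $p=\infty$). Everything else is a formal rewriting identical to what was already done in (\ref{eq:1}).
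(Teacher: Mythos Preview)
Your proposal is correct and matches the paper's intended argument: the paper simply states the theorem as a generalization of \eqref{eq:1} without giving an explicit proof, and your write-up supplies precisely the obvious adaptation, replacing the $L^\infty\times L^1$ integrability observation by H\"older's inequality together with Remark~\ref{Lp U-invariant}.
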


\section{Unitarily Invariant Spaces}\label{U inv section}

\subsection{The Weak*-Closed \texorpdfstring{$\U$}{U}-Invariant Subspaces of \texorpdfstring{$L^\infty(S)$}{L(S)}}\label{U inv spaces}

Recall that each $E_\Omega$ is $\U$-invariant, and consequently, each $\overline{E}_\Omega^{*}$ is a weak*-closed $\U$-invariant subspace of $L^\infty(S)$ by Corollary \ref{Linf U-inv closure}. The main result of this section (Theorem \ref{weak*-closed U inv}) is that the spaces $\overline{E}_\Omega^{*}$ are the \textit{only} weak*-closed $\U$-invariant subspaces of $L^\infty(S)$, and thus each weak*-closed $\U$-invariant subspace of $L^\infty(S)$ is characterized by the corresponding set $\Omega$. As was the case for Theorem \ref{main C and Lp result}, $\Omega$ is the set of all $(p,q)\in Q$ such that $\pi_{pq}$ does not annihilate the space.

\begin{theorem}\label{weak*-closed U inv}
If $Y$ is a weak*-closed $\U$-invariant subspace of $L^\infty(S)$, then $Y=\overline{E}_\Omega^{*}$ for some $\Omega\subset Q$.
\end{theorem}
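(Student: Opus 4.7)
The plan is to adapt the Nagel--Rudin strategy behind Theorem \ref{main C and Lp result}, replacing norm-integration over $\U$ with weak*-integration. Define
$$\Omega = \{(p,q) \in Q : \pi_{pq} Y \neq \{0\}\},$$
and aim to prove both inclusions $\overline{E}_\Omega^* \subset Y$ and $Y \subset \overline{E}_\Omega^*$.

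The common ingredient will be an averaging construction: for $\phi \in C(\U)$ and $f \in L^\infty(S)$, I would define $T_\phi f \in L^\infty(S)$ as the unique element satisfying
$$\Lambda_g(T_\phi f) = \int_\U \phi(U)\,\Lambda_g(f \circ U)\,dU \qquad (g \in L^1(S)).$$
The integrand on the right is continuous in $U$ by Theorem \ref{weak* cont}, and the resulting functional on $L^1(S)$ is bounded by $\|\phi\|_\infty\|f\|_\infty$, so $T_\phi f$ exists by $L^\infty$--$L^1$ duality with norm at most $\|\phi\|_\infty\|f\|_\infty$. Since $T_\phi f$ is a weak*-limit of Riemann sums $\sum_i \phi(U_i)(f \circ U_i)\Delta_i$ --- each a linear combination of $\U$-translates of $f$ --- $T_\phi f$ automatically lies in every weak*-closed $\U$-invariant subspace containing $f$.

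For $\overline{E}_\Omega^* \subset Y$: taking $\phi = d_{pq}\chi_{pq}$, where $\chi_{pq}$ is the character of the $\U$-representation on $H(p,q)$ and $d_{pq} = \dim H(p,q)$, the Peter--Weyl projection formula gives $T_\phi f = \pi_{pq}f$. Hence $\pi_{pq}f \in Y$ for each $f \in Y$, so $\pi_{pq}Y$ is a $\U$-invariant subspace of $H(p,q)$. By $\U$-minimality (Theorem \ref{L2 Hpq structure}(b)), $\pi_{pq}Y$ is either $\{0\}$ or $H(p,q)$; for $(p,q) \in \Omega$ the latter holds, so $H(p,q) \subset Y$. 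Thus $E_\Omega \subset Y$ and, by weak*-closedness, $\overline{E}_\Omega^* \subset Y$.

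For $Y \subset \overline{E}_\Omega^*$: fix $f \in Y$ and let $\{\phi_n\} \subset C(\U)$ be a nonnegative approximate identity at the identity of $\U$. The continuity of $U \mapsto \Lambda_g(f \circ U)$ forces $T_{\phi_n}f \to f$ in the weak*-topology. I would then show each $T_{\phi_n}f$ lies in $\overline{E}_\Omega^\infty$ by approximating $\phi_n$ uniformly by trigonometric polynomials on $\U$ --- matrix coefficients of irreducible $\U$-representations are dense in $C(\U)$ by Peter--Weyl. For a matrix coefficient $m$ of an irreducible representation $\pi$, Schur's lemma applied to the $\U$-equivariant map $T_m$ places $T_m f$ in an isotypic component of $L^2(S)$: it vanishes unless $\pi \cong H(p,q)$ for some $(p,q)$, in which case $T_m f \in H(p,q)$ depends on $f$ only through $\pi_{pq}f$. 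The preliminary step then gives $T_m f = 0$ for $(p,q) \notin \Omega$ and $T_m f \in H(p,q) \subset E_\Omega$ for $(p,q) \in \Omega$; hence any trigonometric polynomial $\psi$ yields $T_\psi f \in E_\Omega$. Uniform limits give $T_{\phi_n}f \in \overline{E}_\Omega^\infty \subset \overline{E}_\Omega^*$, and weak*-closedness of the latter finally gives $f \in \overline{E}_\Omega^*$. The principal obstacle is the Peter--Weyl/Schur bookkeeping on $\U$ --- identifying the image of each matrix-coefficient averaging operator with a specific $H(p,q)$ and verifying that it depends on $f$ only through $\pi_{pq}f$.
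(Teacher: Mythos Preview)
Your approach is correct and genuinely different from the paper's. The paper does not build weak*-averaging operators $T_\phi$ or invoke Peter--Weyl directly; instead it proves a single reduction lemma (Lemma~\ref{main lemma}): for any $\U$-invariant subspace $Y\subset L^\infty(S)$, if $g\notin\overline{Y}^{*}$ then $g\notin\overline{Y}^{2}$, whence $\overline{Y}^{*}=\overline{Y}^{2}\cap L^\infty(S)$. Applying this with $Y$ weak*-closed gives $Y=\overline{Y}^{2}\cap L^\infty(S)$, and since $\overline{Y}^{2}$ is a closed $\U$-invariant subspace of $L^2(S)$, the Nagel--Rudin theorem (Theorem~\ref{main C and Lp result}) yields $\overline{Y}^{2}=\overline{E}_\Omega^{2}$ and hence $Y=\overline{E}_\Omega^{2}\cap L^\infty(S)=\overline{E}_\Omega^{*}$. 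The paper's route is short and leverages the known $L^2$ classification as a black box; your route is more self-contained, reproving both inclusions from scratch via harmonic analysis on $\U$, at the cost of the representation-theoretic bookkeeping you flag (in particular, you implicitly need that the $H(p,q)$ are pairwise inequivalent as $\U$-modules so that the character projection really is $\pi_{pq}$). One small correction: the continuity of $U\mapsto\Lambda_g(f\circ U)$ is Lemma~\ref{cont map}, not Theorem~\ref{weak* cont}; the latter is continuity in $f$ for fixed $U$, which is a different statement.
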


The proof of Theorem \ref{weak*-closed U inv} requires Lemma \ref{main lemma} (the analogue to Lemma \ref{main C(S) lemma}), which we prove in Section \ref{proof main lemma}.

\begin{lemma}\label{main lemma}
Let $Y\subset L^\infty(S)$ be a $\U$-invariant space. Then for $g\in L^\infty(S)$, we have that $g\notin\overline{Y}^{2}$ whenever $g\notin\overline{Y}^{*}$.
\end{lemma}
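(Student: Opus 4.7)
The plan is to prove the contrapositive: assuming $g \in L^\infty(S)$ with $g \notin \overline{Y}^{*}$, I will produce a function in $L^2(S)$ that annihilates $Y$ but pairs nontrivially with $g$, forcing $g \notin \overline{Y}^{2}$.

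First, I would apply Hahn--Banach separation in the locally convex space $(L^\infty(S),\mathrm{weak}^{*})$, whose continuous dual is $L^1(S)$ via integration against $\sigma$. Since $Y$ is a subspace, $\overline{Y}^{*}$ is a weak*-closed convex subspace not containing $g$, so there exists $h \in L^1(S)$ with $\int_S f h\,d\sigma = 0$ for every $f \in Y$ and $\int_S g h\,d\sigma \neq 0$.

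Next, consider the annihilator $Y^{\perp} := \{h' \in L^1(S): \int_S f h'\,d\sigma = 0 \text{ for all } f \in Y\}$. This is norm-closed in $L^1(S)$ as an intersection of kernels of continuous functionals. It is also $\U$-invariant: for $h' \in Y^{\perp}$, $U \in \U$, and $f \in Y$, Theorem \ref{switcheroo} together with $f \circ U \in Y$ yields
\begin{equation*}
\int_S f \cdot (h' \circ U^{-1})\,d\sigma = \int_S (f \circ U) \cdot h'\,d\sigma = 0.
\end{equation*}
Theorem \ref{main C and Lp result} applied to $L^1(S)$ then gives some $\Omega' \subset Q$ with $Y^{\perp} = \overline{E}_{\Omega'}^{1}$, and I can choose $h_n \in E_{\Omega'}$ with $h_n \to h$ in $L^1$.

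Each $h_n$ is a finite sum of spherical harmonics, hence $h_n \in L^\infty(S) \subset L^2(S)$, and $h_n \in E_{\Omega'} \subset Y^{\perp}$ still annihilates $Y$. Since $g \in L^\infty(S)$, we have $|\int_S g(h_n - h)\,d\sigma| \leq \|g\|_\infty \|h_n - h\|_1 \to 0$, so some $h_N$ satisfies $\int_S g h_N\,d\sigma \neq 0$. By Cauchy--Schwarz the map $f \mapsto \int_S f h_N\,d\sigma$ is continuous on $L^2(S)$; it vanishes on $Y$ and hence on $\overline{Y}^{2}$, while being nonzero at $g$, so $g \notin \overline{Y}^{2}$. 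The conceptual hinge is the transfer of $\U$-invariance from $Y$ to its $L^1$-annihilator; once that is in hand, Theorem \ref{main C and Lp result} for $L^1(S)$ collapses the $L^\infty$-question to a routine approximation argument, and I do not anticipate a serious obstacle.
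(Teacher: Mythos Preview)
Your argument is correct, but it follows a genuinely different route from the paper's. The paper mirrors Rudin's proof of Lemma~\ref{main C(S) lemma}: starting from the same separating $h\in L^1(S)$, it invokes Lemma~\ref{cont map} (weak*-continuity of $U\mapsto g\circ U$) to find a neighborhood of the identity in $\U$ on which $\mathrm{Re}\int_S(g\circ U)h\,d\sigma>\tfrac12$, then mollifies by a bump function $\psi$ on $\U$ to define
\[
\Lambda F=\int_S h(z)\int_\U \psi(U)F(Uz)\,dU\,d\sigma(z),
\]
and checks via Proposition~\ref{prop147} and the Schwarz inequality that $\Lambda$ is $L^2$-bounded, annihilates $Y$ by its $\U$-invariance, and does not annihilate $g$.

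Your approach instead transfers the $\U$-invariance of $Y$ to its $L^1$-annihilator $Y^\perp$ via Theorem~\ref{switcheroo}, applies Theorem~\ref{main C and Lp result} with $X=L^1(S)$ to write $Y^\perp=\overline{E}_{\Omega'}^{1}$, and approximates $h$ in $L^1$ by harmonic polynomials $h_n\in E_{\Omega'}\subset L^2(S)$, one of which still separates $g$ from $Y$. This is shorter and avoids the averaging construction entirely; the trade-off is that you import the full strength of Nagel--Rudin's $L^1$ classification, whereas the paper's argument is self-contained modulo the elementary Lemma~\ref{cont map} and parallels the classical $C(S)$ proof. There is no circularity, since Theorem~\ref{main C and Lp result} is established independently of the present lemma.
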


We use Lemma \ref{main lemma} to establish some final observations.

\begin{remark}\label{main remark}
From Remark \ref{weak* and weak Lp closure containment} and Lemma \ref{main lemma}, we have for any $\U$-invariant subspace $Y\subset L^\infty(S)$, $$\overline{Y}^{*}=\overline{Y}^{2}\cap L^\infty(S).$$
\end{remark}

\begin{remark}\label{main remark E}
For each set $\Omega\subset Q$, the set $E_\Omega$ is a $\U$-invariant subspace of $L^\infty(S)$. Thus, Remark \ref{main remark} and Remark \ref{E Omega L2 pi pq} give a description of the sets $\overline{E}_\Omega^{*}$: $$\overline{E}_\Omega^{*}=\overline{E}_\Omega^{2}\cap L^\infty(S)=\{f\in L^\infty(S):\pi_{pq}f=0\text{ when }(p,q)\notin\Omega\}.$$
\end{remark}

\begin{proof}[Proof of Theorem \ref{weak*-closed U inv}.]
Let $Y$ be a weak*-closed $\U$-invariant subspace of $L^\infty(S)$. We observe from Remark \ref{main remark} that $$Y=\overline{Y}^{*}=\overline{Y}^{2}\cap L^\infty(S).$$

Since $Y$ is $\U$-invariant, so is $\overline{Y}^{2}$ from Corollary \ref{Lp and C U-inv closure}. As a closed $\U$-invariant subspace of $L^2(S)$, we have that $$\overline{Y}^{2}=\overline{E}_{\Omega'}^{2},$$ where $\Omega'=\{(p,q)\in Q:\pi_{pq}\overline{Y}^{2}\neq 0\}$, by Theorem \ref{main C and Lp result}.

We define $\Omega=\{(p,q)\in Q:\pi_{pq}Y\neq 0\}$. Then, Remark \ref{main remark E} yields $$\overline{E}_\Omega^{2}\cap L^\infty(S)=\overline{E}_\Omega^{*}.$$ By Theorem \ref{pi pq closure}, we have that $\Omega=\Omega^\prime$ and thus $\overline{E}_\Omega^{2}=\overline{E}_{\Omega'}^{2}$, which completes the proof.
\end{proof}

The proof of Theorem \ref{weak*-closed U inv} shows that, as in Theorem \ref{main C and Lp result}, the set $\Omega$ such that $Y=\overline{E}_\Omega^{*}$ is the set $\{(p,q):\pi_{pq}Y\neq 0\}$.

\subsection{The Weak*-Closed \texorpdfstring{$\U$}{U}-Invariant Subalgebras of \texorpdfstring{$L^\infty(S)$}{L(S)}.}\label{U inv algebras}

In this section we determine those sets $\Omega\subset Q$ whose corresponding weak*-closed $\U$-invariant spaces $\overline{E}_\Omega^{*}$ are algebras. The main result is that the subsets of $Q$ which induce closed $\U$-invariant subalgebras of $C(S)$ and those which induce weak*-closed $\U$-invariant subalgebras of $L^\infty(S)$ are exactly the same (Theorem \ref{algebra patterns}). We can then utilize results in \cite{RUA} to characterize certain weak*-closed $\U$-invariant subalgebras of $L^\infty(S)$ (Section \ref{applications}).

\begin{definition}
A set $\Omega\subset Q$ is a $C(S)$-\textbf{algebra pattern} if $\overline{E}_\Omega$ is an algebra in $C(S)$.
\end{definition}

\begin{definition}
A set $\Omega\subset Q$ is a $L^\infty(S)$-\textbf{algebra pattern} if $\overline{E}_\Omega^{*}$ is an algebra in $L^\infty(S)$.
\end{definition}

With these definitions, our result concerning the weak*-closed $\U$-invariant subalgebras of $L^\infty(S)$ can be stated in the following succinct form:

\begin{theorem}\label{algebra patterns}
A set $\Omega\subset Q$ is a $C(S)$-algebra pattern if and only if $\Omega$ is a $L^\infty(S)$-algebra pattern.
\end{theorem}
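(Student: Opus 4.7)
The plan is to use $L^2(S)$ as a common bridge between the uniform and weak*-topologies, reducing both algebra-pattern conditions to the same $L^2$-level statement. Remark \ref{main remark E} already supplies $\overline{E}_\Omega^{*} = \overline{E}_\Omega^{2} \cap L^\infty(S)$; the parallel identity $\overline{E}_\Omega = \overline{E}_\Omega^{2} \cap C(S)$ follows by applying Lemma \ref{main C(S) lemma} to the closed $\U$-invariant subspace $\overline{E}_\Omega \subset C(S)$, after noting that the $L^2$-closure of $E_\Omega$ coincides with the $L^2$-closure of its uniform closure (since uniform convergence on $S$ implies $L^2$-convergence). These two descriptions, combined with the inclusion $\overline{E}_\Omega \subset \overline{E}_\Omega^{*}$, form the backbone of the argument.

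For the implication that a $C(S)$-algebra pattern is an $L^\infty(S)$-algebra pattern, I would begin with $E_\Omega \cdot E_\Omega \subset \overline{E}_\Omega \subset \overline{E}_\Omega^{2}$, available because $\overline{E}_\Omega$ is assumed to be an algebra containing $E_\Omega$. Given $f, g \in \overline{E}_\Omega^{*}$, I would choose sequences $f_n, g_m \in E_\Omega$ with $f_n \to f$ and $g_m \to g$ in $L^2(S)$; each product $f_n g_m$ then lies in $\overline{E}_\Omega^{2}$. Letting $m \to \infty$ for fixed $n$, the bound $\n{f_n g_m - f_n g}_2 \le \n{f_n}_\infty \n{g_m - g}_2$ forces $f_n g \in \overline{E}_\Omega^{2}$; letting $n \to \infty$, the bound $\n{f_n g - fg}_2 \le \n{g}_\infty \n{f_n - f}_2$ forces $fg \in \overline{E}_\Omega^{2}$. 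Since $fg \in L^\infty(S)$ is automatic, one concludes $fg \in \overline{E}_\Omega^{*}$.

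For the reverse implication the argument is immediate: given $f, g \in \overline{E}_\Omega$, the inclusion $\overline{E}_\Omega \subset \overline{E}_\Omega^{*}$ together with the hypothesis that $\overline{E}_\Omega^{*}$ is an algebra places $fg$ in $\overline{E}_\Omega^{*} = \overline{E}_\Omega^{2} \cap L^\infty(S)$; continuity of $fg$ then places it in $\overline{E}_\Omega^{2} \cap C(S) = \overline{E}_\Omega$ via the identity recorded at the outset.

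The main hurdle will be the iterated $L^2$-limit in the forward direction, since multiplication is not jointly continuous on $L^2 \times L^2$ and one therefore cannot simply multiply two $L^2$-convergent approximations. The critical observation is that the representation $\overline{E}_\Omega^{*} = \overline{E}_\Omega^{2} \cap L^\infty(S)$ automatically supplies the essential $L^\infty$-bounds on the target functions $f$ and $g$, which is exactly what is needed to run the one-variable estimate $\n{uv}_2 \le \n{u}_\infty \n{v}_2$ in succession and trap $fg$ inside $\overline{E}_\Omega^{2}$.
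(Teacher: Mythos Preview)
Your proof is correct and takes a genuinely different route from the paper. The paper does not work directly with $L^2$-approximation of products; instead it factors the equivalence through the finite-dimensional pieces $H(p,q)\cdot H(r,s)$. First it proves (Theorem \ref{nec and suff condition}) that $\overline{E}_\Omega^{*}$ is an algebra if and only if $H(p,q)\cdot H(r,s)\subset\overline{E}_\Omega^{*}$ for all $(p,q),(r,s)\in\Omega$, using that pointwise multiplication on $L^\infty(S)$ is \emph{separately} weak*-continuous together with an abstract lemma (Lemma \ref{sep cont}) that separate continuity pushes invariance to closures. It then shows (Lemma \ref{stitch}) that the three containments $H(p,q)\cdot H(r,s)\subset E_\Omega$, $\subset\overline{E}_\Omega$, $\subset\overline{E}_\Omega^{*}$ are all equivalent, since $H(p,q)\cdot H(r,s)$ is itself a finite-dimensional $\U$-invariant subspace of $C(S)$. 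Chaining these gives the theorem.

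What your approach buys is economy: the iterated estimate $\n{uv}_2\le\n{u}_\infty\n{v}_2$ dispatches the hard direction in a few lines, with no need for weak*-continuity of multiplication or the separate-continuity lemma, and your reverse direction via $\overline{E}_\Omega=\overline{E}_\Omega^{2}\cap C(S)$ is pleasantly short. What the paper's approach buys is structural information: isolating the condition $H(p,q)\cdot H(r,s)\subset E_\Omega$ is exactly what links the algebra-pattern question to Rudin's combinatorial criterion (Theorem \ref{criterion}), so the extra machinery is not wasted.
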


The proof of Theorem \ref{algebra patterns} requires the following necessary and sufficient condition for the set $\overline{E}_\Omega^{*}$ to be an algebra. Recall the space $H(p,q)\cdot H(r,s)$ from Definition \ref{H(p,q)H(r,s)}.

\begin{theorem}\label{nec and suff condition}
Let $\Omega\subset Q$. Then $\overline{E}_\Omega^{*}$ is an algebra if and only if $H(p,q)\cdot H(r,s)\subset\overline{E}_\Omega^{*}$ for $(p,q),(r,s)\in\Omega$.
\end{theorem}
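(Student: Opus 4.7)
The forward direction is essentially automatic: if $\overline{E}_\Omega^{*}$ is an algebra and $(p,q),(r,s)\in\Omega$, then $H(p,q),H(r,s)\subset E_\Omega\subset \overline{E}_\Omega^{*}$, so every product $fg$ with $f\in H(p,q)$, $g\in H(r,s)$ lies in $\overline{E}_\Omega^{*}$; since $\overline{E}_\Omega^{*}$ is a linear space, the linear span $H(p,q)\cdot H(r,s)$ is contained in it. So my real work is the converse.

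For the converse, my plan is to first handle products of elements of $E_\Omega$ (a purely algebraic step), and then extend to the weak*-closure via separate continuity of multiplication. Concretely, any $f,g\in E_\Omega$ are finite sums $f=\sum f_{pq}$, $g=\sum g_{rs}$ with $(p,q),(r,s)\in\Omega$, and distributing gives $fg=\sum f_{pq}g_{rs}$, a finite sum of elements in $H(p,q)\cdot H(r,s)\subset\overline{E}_\Omega^{*}$; thus $E_\Omega\cdot E_\Omega\subset\overline{E}_\Omega^{*}$.

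The key analytic input will be that for any fixed $h\in L^\infty(S)$ the multiplication map $M_h:L^\infty(S)\to L^\infty(S)$, $M_h(f)=fh$, is weak*-continuous. This follows because for each $\phi\in L^1(S)$ the product $h\phi$ lies in $L^1(S)$, so
\[
\Lambda_\phi(M_h f)=\int_S fh\phi\,d\sigma=\Lambda_{h\phi}f,
\]
which is weak*-continuous in $f$; since the functionals $\Lambda_\phi$ generate the weak*-topology, $M_h$ is weak*-continuous. With this in hand, I will extend in two stages. First, fix $f\in E_\Omega$: the map $M_f$ is weak*-continuous and sends $E_\Omega$ into $\overline{E}_\Omega^{*}$, so it sends the weak*-closure $\overline{E}_\Omega^{*}$ into the weak*-closed set $\overline{E}_\Omega^{*}$; hence $E_\Omega\cdot\overline{E}_\Omega^{*}\subset\overline{E}_\Omega^{*}$. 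Second, fix $g\in\overline{E}_\Omega^{*}$: the map $M_g$ is weak*-continuous and, by what was just shown, sends $E_\Omega$ into $\overline{E}_\Omega^{*}$, so by the same closure argument it sends $\overline{E}_\Omega^{*}$ into $\overline{E}_\Omega^{*}$. This gives $\overline{E}_\Omega^{*}\cdot\overline{E}_\Omega^{*}\subset\overline{E}_\Omega^{*}$, proving that $\overline{E}_\Omega^{*}$ is an algebra.

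The main (and really only) potential obstacle is that multiplication on $L^\infty(S)$ is \emph{not} jointly weak*-continuous, so one cannot pass to the limit in both variables simultaneously. The two-stage extension above sidesteps this by using only separate weak*-continuity, which is valid, together with the weak*-density of $E_\Omega$ in $\overline{E}_\Omega^{*}$ and the fact that $\overline{E}_\Omega^{*}$ is weak*-closed; everything else is bookkeeping.
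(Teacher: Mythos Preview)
Your proof is correct and follows essentially the same strategy as the paper: both rely on the separate weak*-continuity of multiplication on $L^\infty(S)$ (the paper's Proposition~\ref{mf weak* cont}, which you reprove inline) and a two-stage closure argument (the paper packages this as the abstract Lemma~\ref{sep cont}).

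The one noteworthy difference is in the algebraic step. The paper first invokes Lemma~\ref{stitch} to upgrade the hypothesis $H(p,q)\cdot H(r,s)\subset\overline{E}_\Omega^{*}$ to $H(p,q)\cdot H(r,s)\subset E_\Omega$, so that $E_\Omega$ itself becomes an algebra, and then applies Lemma~\ref{sep cont} in its stated form (which requires $m(B\times B)\subset B$). You bypass Lemma~\ref{stitch} entirely: you only establish $E_\Omega\cdot E_\Omega\subset\overline{E}_\Omega^{*}$ and then run the two-stage extension directly, which works because the preimage argument only needs the target set to be weak*-closed. This is a mild but genuine simplification, since Lemma~\ref{stitch} in the paper appeals to the structure theory (Theorem~\ref{main C and Lp result} and Remark~\ref{main remark}) and is not needed for this theorem under your arrangement.
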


The proof of Theorem \ref{nec and suff condition} is given in Section \ref{proof nec and suff condition}. The following lemma plays a role in the proofs of both Theorem \ref{algebra patterns} and Theorem \ref{nec and suff condition}.

\begin{lemma}\label{stitch}
Suppose $\Omega\subset Q$ and $(p,q),(r,s)\in Q$. The following are all true or all false:
\begin{itemize}
\item[(1)] $H(p,q)\cdot H(r,s)\subset E_\Omega$.
\item[(2)] $H(p,q)\cdot H(r,s)\subset \overline{E}_\Omega$.
\item[(3)] $H(p,q)\cdot H(r,s)\subset \overline{E}_\Omega^{*}$.
\end{itemize}
\end{lemma}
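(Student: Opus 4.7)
The lemma asserts the equivalence of three nested containments of the product space $H(p,q)\cdot H(r,s)$ in increasingly large envelopes of $E_\Omega$. The chain $E_\Omega \subset \overline{E}_\Omega \subset \overline{E}_\Omega^{*}$ gives $(1)\Rightarrow(2)\Rightarrow(3)$ for free: the first inclusion is tautological, and uniform convergence of continuous functions implies weak*-convergence in $L^\infty(S)$ (by dominated convergence, since $\sigma(S)<\infty$, every uniform limit tests correctly against each $g\in L^1(S)$).

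The real content is $(3)\Rightarrow(1)$, and my plan is to reduce it to a finite harmonic-decomposition statement about the polynomial $fg$. Fix $f\in H(p,q)$ and $g\in H(r,s)$; then $fg$ is the restriction to $S$ of a bihomogeneous polynomial of bidegree $(p+r,q+s)$ on $\C^n$. By the standard decomposition of such polynomials into harmonic components (peeling off factors of $|z|^2=1$ on $S$), $fg$ admits a \emph{finite} expansion
$$fg \;=\; \sum_{j=0}^{N} h_j, \qquad h_j \in H(p+r-j,\,q+s-j), \quad N \leq \min(p+r,\,q+s).$$
By the pairwise orthogonality of the spaces $H(a,b)$ in $L^2(S)$ (Theorem \ref{L2 Hpq structure}(e)) together with the projection property, $h_j = \pi_{p+r-j,\,q+s-j}(fg)$.

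Assuming (3), the product $fg$ lies in $\overline{E}_\Omega^{*}$. By Remark \ref{main remark E}, $\pi_{ab}(fg)=0$ for every $(a,b)\notin\Omega$, so every $h_j$ whose index falls outside $\Omega$ vanishes. The surviving finite sum then lies in $E_\Omega$, i.e.\ $fg\in E_\Omega$. Since $f$ and $g$ were arbitrary, (1) follows.

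The only step that merits any care is the finiteness of the harmonic decomposition of $fg$, which is a standard fact about bihomogeneous polynomials on $\C^n$ restricted to $S$ (a short inductive argument using $|z|^2=1$, or else a direct citation to the corresponding result in \cite{RFT}); aside from invoking it, there is no substantive obstacle. Once $(3)\Rightarrow(1)$ is in hand, the lemma is closed, and it can then be fed into the proofs of Theorem \ref{nec and suff condition} and Theorem \ref{algebra patterns} as advertised.
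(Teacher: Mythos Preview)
Your proof is correct. The implication $(3)\Rightarrow(1)$ is handled differently than in the paper, however. You work pointwise with the product $fg$, invoking the explicit bihomogeneous harmonic decomposition $fg=\sum_{j} h_j$ with $h_j\in H(p+r-j,q+s-j)$, and then use Remark~\ref{main remark E} to kill the terms outside $\Omega$. The paper instead argues more abstractly: since $H(p,q)\cdot H(r,s)$ is a finite-dimensional $\U$-invariant subspace of $C(S)$, Theorem~\ref{main C and Lp result} gives $H(p,q)\cdot H(r,s)=E_{\Omega'}$ for some finite $\Omega'\subset Q$; then for each $(a,b)\in\Omega'$, the containment $H(a,b)\subset\overline{E}_\Omega^{*}$ together with the projection characterization (Remark~\ref{main remark}) forces $(a,b)\in\Omega$, hence $E_{\Omega'}\subset E_\Omega$. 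Your route is more explicit and self-contained (it avoids appealing to the Nagel--Rudin classification of $C(S)$-subspaces), at the cost of importing the polynomial decomposition lemma from \cite{RFT}; the paper's route is shorter but leans on heavier prior machinery. Either way, the crucial input is the same identity $\overline{E}_\Omega^{*}=\{f\in L^\infty(S):\pi_{ab}f=0\text{ for }(a,b)\notin\Omega\}$. One small wording point: you conclude ``Since $f$ and $g$ were arbitrary, (1) follows,'' but $H(p,q)\cdot H(r,s)$ is the \emph{span} of such products, so you should note that $E_\Omega$ is a linear space; this is of course trivial.
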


\begin{proof}
Observe since $E_\Omega\subset\overline{E}_\Omega\subset\overline{E}_\Omega^{*}$, we have that (1) implies (2) and (2) implies (3).

To show (3) implies (1), observe $H(p,q)\cdot H(r,s)$ is finite-dimensional $\U$-invariant subspace of $C(S)$. Thus, by Theorem \ref{main C and Lp result} there exists some set $\Omega'\subset Q$ such that $$H(p,q)\cdot H(r,s)=E_{\Omega'}.$$ Suppose $(a,b)\in\Omega'$. Assuming (3) is true, we get that $$H(a,b)\subset H(p,q)\cdot H(r,s)\subset\overline{E}_\Omega^{*}.$$ A consequence of Remark \ref{main remark} is that for any $\U$-invariant subspace $Y$ of $L^\infty(S)$, $\pi_{pq}Y=0$ if and only if $\pi_{pq}\overline{Y}^{*}=0$. As a result, we get that $H(a,b)\subset E_\Omega$. Since $E_\Omega$ must also contain the algebraic sum $E_{\Omega^\prime}$ of the sets $H(a,b)$ for $(a,b)\in\Omega'$, we have that (1) holds.
\end{proof}

\begin{proof}[Proof of Theorem \ref{algebra patterns}.]
The proof follows from applications of Lemma \ref{stitch} and Theorem \ref{nec and suff condition}.

Observe that $\overline{E}_\Omega$ is an algebra if and only if $H(p,q)\cdot H(r,s)\subset\overline{E}_\Omega$ for $(p,q),(r,s)\in\Omega$. This holds if and only if $H(p,q)\cdot H(r,s)\subset\overline{E}_\Omega^{*}$ for $(p,q),(r,s)\in\Omega$ (Lemma \ref{stitch}), which holds if and only if $\overline{E}_\Omega^{*}$ is an algebra (Theorem \ref{nec and suff condition}).

Thus, $\overline{E}_\Omega$ is an algebra if and only if $\overline{E}_\Omega^{*}$ is an algebra. Hence, $\Omega$ is a $C(S)$-algebra pattern if and only if $\Omega$ is a $L^\infty(S)$-algebra pattern.
\end{proof}

Having established Theorem \ref{algebra patterns}, we thus have that when $n\geq 3$, the weak*-closed $\U$-invariant subalgebras of $L^\infty(S)$ are characterized by those sets $\Omega\subset Q$ described in Theorem~\ref{criterion}, and that the same exceptions exist when $n=2$.

\subsection{Applications of Theorem \ref{algebra patterns}}\label{applications}

In this section we state analogues to theorems from \cite{RUA} which Theorem \ref{algebra patterns} allows us to establish. We omit the proofs, since each is a simple matter of applying the stated result of \cite{RUA} followed by Theorem \ref{algebra patterns}. Recall the sets $G(d)$, $G(\Sigma)$, and $G(p,q)$ from Definition \ref{Rudin alg patts}; by Theorem \ref{algebra patterns}, each is an $L^\infty(S)$-algebra pattern.

The following is the analogue to Theorem \ref{RUA Thm 2}.

\begin{theorem}\label{Linf G(d)}
If $\Omega$ is an $L^\infty(S)$-algebra pattern that contains some $(p,q)$ with $p>q$ and some $(r,s)$ with $r<s$, then $\Omega=G(d)$ for some $d>0$.
\end{theorem}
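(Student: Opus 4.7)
The plan is to reduce this immediately to the continuous case via Theorem \ref{algebra patterns}. First I would note that by hypothesis $\Omega$ is an $L^\infty(S)$-algebra pattern, so $\overline{E}_\Omega^{*}$ is a weak*-closed $\U$-invariant algebra in $L^\infty(S)$. Applying Theorem \ref{algebra patterns}, I obtain that $\Omega$ is also a $C(S)$-algebra pattern, i.e.\ $\overline{E}_\Omega$ is a closed $\U$-invariant subalgebra of $C(S)$.

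At this stage the hypothesis about $\Omega$ (containing some $(p,q)$ with $p>q$ and some $(r,s)$ with $r<s$) is exactly the hypothesis of Theorem \ref{RUA Thm 2} from \cite{RUA}. So the next step is simply to invoke Theorem \ref{RUA Thm 2}, which concludes that $\Omega=G(d)$ for some $d>0$.

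There is essentially no obstacle here: the entire content of the theorem is packaged in Theorem \ref{algebra patterns}, which transfers the classification of $\U$-invariant algebras from $C(S)$ to $L^\infty(S)$ with the weak*-topology. Once that bridge is crossed, the result is a direct citation of Rudin's theorem. The only point worth emphasizing in the write-up is that the combinatorial characterization of algebra patterns depends only on $\Omega$, not on the ambient function space, which is precisely what Theorem \ref{algebra patterns} guarantees.
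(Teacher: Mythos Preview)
Your proposal is correct and matches the paper's approach exactly: the paper explicitly omits the proof, noting it is ``a simple matter of applying the stated result of \cite{RUA} followed by Theorem \ref{algebra patterns},'' which is precisely what you do.
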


The following is the analogue to Theorem \ref{RUA Thm 3}.

\begin{theorem}\label{Linf G(sig)}
Suppose $\Omega$ is an $L^\infty(S)$-algebra pattern such that $p\geq q$ for every $(p,q)\in\Omega$ and $(a,a)\in\Omega$ for some $a>0$.
\begin{itemize}
    \item[(1)] If $n\geq 3$, then $\Omega=G(\Sigma)$.
    \item[(2)] If $n=2$ and if $G^*(\Sigma)$ is obtained from $G(\Sigma)$ by deleting all $(m,m)$ with $m$ odd, then $G^*(\Sigma)$ is also an algebra pattern, and $\Omega$ is either $G(\Sigma)$ or $G^*(\Sigma)$.
\end{itemize}
\end{theorem}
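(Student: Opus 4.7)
The plan is to reduce this theorem directly to Rudin's Theorem \ref{RUA Thm 3} using Theorem \ref{algebra patterns}, since the latter asserts that the classes of $C(S)$- and $L^\infty(S)$-algebra patterns coincide. First I would invoke Theorem \ref{algebra patterns} to conclude that the given $\Omega$, being an $L^\infty(S)$-algebra pattern, is also a $C(S)$-algebra pattern. The hypotheses $p \geq q$ for every $(p,q) \in \Omega$ and $(a,a) \in \Omega$ for some $a>0$ are stated purely in terms of $\Omega$ and transfer without modification.

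For part (1), with $n \geq 3$, Theorem \ref{RUA Thm 3}(1) then immediately forces $\Omega = G(\Sigma)$ for some additive semigroup $\Sigma$ of positive integers, which is the desired conclusion. For part (2), with $n=2$, I would argue in two steps. To see that $G^*(\Sigma)$ is an $L^\infty(S)$-algebra pattern, note that Theorem \ref{RUA Thm 3}(2) asserts $G^*(\Sigma)$ is a $C(S)$-algebra pattern; hence Theorem \ref{algebra patterns} promotes this to being an $L^\infty(S)$-algebra pattern. For the classification, Theorem \ref{RUA Thm 3}(2) applied to the $C(S)$-algebra pattern $\Omega$ forces $\Omega$ to be $G(\Sigma)$ or $G^*(\Sigma)$, which completes the proof.

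There is no genuine obstacle here because the entire content of the theorem has been absorbed into the previously established equivalence (Theorem \ref{algebra patterns}) between $C(S)$- and $L^\infty(S)$-algebra patterns. The only point requiring a moment's care is the assertion in part (2) that $G^*(\Sigma)$ is itself an $L^\infty(S)$-algebra pattern, but this is handled by noting that ``algebra pattern'' in Rudin's original sense is an assertion about $\overline{E}_{G^*(\Sigma)}$ being an algebra in $C(S)$, and Theorem \ref{algebra patterns} then bootstraps this to the weak*-closure in $L^\infty(S)$. Consequently the proof reduces to two invocations of Theorem \ref{algebra patterns} sandwiching a single application of Theorem \ref{RUA Thm 3}.
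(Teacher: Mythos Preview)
Your proposal is correct and matches the paper's approach exactly: the paper omits the proof, stating only that it is ``a simple matter of applying the stated result of \cite{RUA} followed by Theorem~\ref{algebra patterns},'' which is precisely the reduction you describe. Your extra remark about handling $G^*(\Sigma)$ via a second invocation of Theorem~\ref{algebra patterns} is the right way to make that step explicit.
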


The following is the analogue to Theorem \ref{RUA Thm 4}.

\begin{theorem}\label{smallest Linf alg pattern}
Fix $(p,q)\in Q$ with $p>q$. Let $\Omega^\infty(p,q)$ be the smallest $L^\infty(S)$-algebra pattern containing $(p,q)$.
\begin{itemize}
	\item[(1)] If $n\geq 3$, then $\Omega^\infty(p,q)=G(p,q)$.
	\item[(2)] If $n=2$, then $\Omega^\infty(p,q)$ is obtained from $G(p,q)$ by deleting points $(mp-1,mq-1)$ for $m=2,3,4,\ldots$, and points $(2p-j,2q-j)$ for $j$ odd.
\end{itemize}
\end{theorem}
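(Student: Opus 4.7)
The plan is to reduce the statement directly to Theorem \ref{RUA Thm 4} via Theorem \ref{algebra patterns}. First I would verify that the smallest $L^\infty(S)$-algebra pattern containing $(p,q)$ is actually well-defined, i.e., that an arbitrary intersection of $L^\infty(S)$-algebra patterns is an $L^\infty(S)$-algebra pattern. For a family $\{\Omega_\alpha\}\subset Q$, Remark \ref{main remark E} gives
\[
\bigcap_\alpha \overline{E}_{\Omega_\alpha}^{*}
=\bigcap_\alpha\{f\in L^\infty(S):\pi_{pq}f=0\text{ when }(p,q)\notin\Omega_\alpha\}
=\overline{E}_{\cap_\alpha\Omega_\alpha}^{*},
\]
and the intersection of algebras is an algebra, so $\bigcap_\alpha\Omega_\alpha$ is again an $L^\infty(S)$-algebra pattern. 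Hence $\Omega^\infty(p,q)$ exists as the intersection of all $L^\infty(S)$-algebra patterns containing $(p,q)$. The analogous argument (using \cite{NR}) shows that the smallest $C(S)$-algebra pattern $\Omega(p,q)$ containing $(p,q)$ is likewise well-defined.

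Next, I would invoke Theorem \ref{algebra patterns}, which asserts that the classes of $C(S)$-algebra patterns and $L^\infty(S)$-algebra patterns coincide as subsets of $2^Q$. Consequently the family of $L^\infty(S)$-algebra patterns containing $(p,q)$ is identical to the family of $C(S)$-algebra patterns containing $(p,q)$, and taking the intersection of either family yields the same subset of $Q$. Therefore
\[
\Omega^\infty(p,q)=\Omega(p,q).
\]

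Finally, Theorem \ref{RUA Thm 4} explicitly evaluates $\Omega(p,q)$: it equals $G(p,q)$ when $n\geq 3$, and when $n=2$ it equals $G(p,q)$ with the points $(mp-1,mq-1)$ for $m\geq 2$ and the points $(2p-j,2q-j)$ for $j$ odd removed. Substituting this into the equality $\Omega^\infty(p,q)=\Omega(p,q)$ gives both conclusions (1) and (2) of the theorem. The only nontrivial step in this plan is the verification that intersections of algebra patterns remain algebra patterns, so the existence of a smallest element makes sense; after that, the result is a purely formal consequence of Theorem \ref{algebra patterns} and the corresponding continuous-function theorem from \cite{RUA}.
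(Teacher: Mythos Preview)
Your proposal is correct and matches the paper's approach exactly: the paper explicitly omits the proof, stating that it ``is a simple matter of applying the stated result of \cite{RUA} followed by Theorem \ref{algebra patterns},'' which is precisely your reduction $\Omega^\infty(p,q)=\Omega(p,q)$ via Theorem \ref{algebra patterns} together with Theorem \ref{RUA Thm 4}. Your additional verification that intersections of algebra patterns are algebra patterns (so that the smallest one exists) is a welcome bit of care that the paper leaves implicit.
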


As in \cite{RUA}, each self-adjoint algebra $\overline{E}_\Omega^{*}$ is described by Theorem \ref{Linf G(d)} or Theorem \ref{Linf G(sig)} (with $\Sigma$ empty).

\subsection{The Proof of Lemma \ref{main lemma}.}\label{proof main lemma}

The proof of Lemma \ref{main lemma} requires Lemma \ref{cont map}, which is the analogue to Lemma \ref{U to X cont}.

\begin{lemma}\label{cont map}
Let $g\in L^\infty(S)$. Then the map $\varphi:\U\to L^\infty(S)$ given by $\varphi(U)=g\circ U$ is weak*-continuous.
\end{lemma}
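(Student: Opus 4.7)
The plan is to unpack the definition of the weak*-topology on $L^\infty(S)$: $\varphi$ is weak*-continuous iff for every $h\in L^1(S)$, the scalar-valued map $U\mapsto \int_S (g\circ U)\,h\,d\sigma$ is continuous on $\mathscr{U}$. Fix such an $h$. The difficulty is that for $g\in L^\infty(S)$ the map $U\mapsto g\circ U$ is typically not norm-continuous into $L^\infty(S)$ (this is precisely why the weak*-topology is the right one here), so we cannot apply Lemma \ref{U to X cont} directly to $g$.

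The key move is to shift the dependence on $U$ off of $g$ and onto $h$ via Theorem \ref{switcheroo}: for every $U\in\mathscr{U}$,
\[
\int_S (g\circ U)\,h\,d\sigma \;=\; \int_S g\cdot (h\circ U^{-1})\,d\sigma.
\]
Now the unitary action sits inside an $L^1$-function, where Lemma \ref{U to X cont} (applied with $X=L^1(S)$) does give that $V\mapsto h\circ V$ is norm-continuous from $\mathscr{U}$ into $L^1(S)$. Composing with the continuous inversion map $U\mapsto U^{-1}$ on the topological group $\mathscr{U}$, we conclude that $U\mapsto h\circ U^{-1}$ is also norm-continuous from $\mathscr{U}$ into $L^1(S)$.

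To finish, given any net $U_\alpha\to U$ in $\mathscr{U}$, the $L^\infty$--$L^1$ H\"older bound yields
\[
\left|\int_S g\cdot(h\circ U_\alpha^{-1})\,d\sigma - \int_S g\cdot (h\circ U^{-1})\,d\sigma\right| \;\leq\; \n{g}_\infty\, \n{h\circ U_\alpha^{-1}-h\circ U^{-1}}_1 \;\longrightarrow\; 0.
\]
Combined with the identity from Theorem \ref{switcheroo}, this says that $U\mapsto \int_S(g\circ U)h\,d\sigma$ is continuous on $\mathscr{U}$. Since $h\in L^1(S)$ was arbitrary, $\varphi$ is weak*-continuous.

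The main obstacle is conceptual rather than technical: one must recognize that norm-continuity of $U\mapsto g\circ U$ into $L^\infty(S)$ is unavailable (and is the whole reason for passing to the weak*-topology), so the argument must convert the problem into a norm-continuity statement for $L^1$-functions via the duality identity in Theorem \ref{switcheroo}. Once that reformulation is made, the continuity of the scalar pairings is immediate from Lemma \ref{U to X cont} and H\"older.
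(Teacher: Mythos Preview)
Your proof is correct and follows essentially the same route as the paper: both shift the $U$-dependence from $g$ to $h$ via Theorem \ref{switcheroo}, invoke Lemma \ref{U to X cont} for $L^1(S)$ together with continuity of inversion on $\mathscr{U}$, and then use that integration against $g\in L^\infty(S)$ is $L^1$-continuous (you phrase this last step via H\"older, the paper factors it as ``multiplication by $g$'' followed by ``integration,'' which is the same thing).
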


\begin{proof}
To show that $\varphi$ is weak*-continuous, we show that each $\Lambda_h\circ\varphi$ is continuous, where $\Lambda_h$ is the map $L^\infty(S)\to\C$ given by integration against the function $h\in L^1(S)$.

Observe the map $\Lambda_h\circ\varphi:\U\to\C$ is given by $$(\Lambda_h\circ\varphi)(U)=\int_S(g\circ U)\cdot h\,d\sigma.$$ To show $\Lambda_h\circ\varphi$ is continuous, we show that it is a composition of continuous maps.

First, from Lemma \ref{U to X cont}, we have that the map $U\mapsto h\circ U$ is continuous from $\U$ into $L^1(S)$. Next, since $\U$ is a topological group, inversion in $\U$ is continuous, and hence $U\mapsto h\circ U^{-1}$ is a continuous map into $L^1(S)$.

We lastly recall that multiplication by an $L^\infty(S)$ function is a continuous map from $L^1(S)$ into $L^1(S)$. Thus, the map $U\mapsto g\cdot(h\circ U^{-1})$ is continuous. Then, since integration over $S$ is a continuous map from $L^1(S)$ into $\C$, we apply Theorem \ref{switcheroo} to get that $$U\mapsto\int_Sg\cdot (h\circ U^{-1})\,d\sigma=\int_S(g\circ U)\cdot h\,d\sigma$$ is continuous. This completes the proof.
\end{proof}

\begin{proof}[Proof of Lemma \ref{main lemma}.]
Suppose $g\in L^\infty (S)$ and $g\notin\overline{Y}^{*}$. Then there exists some weak*-continuous linear functional $\Gamma$ on $L^\infty(S)$ such that $\Gamma f=0$ for $f\in Y$, and $\Gamma g=1$. Since each weak*-continuous linear functional on $L^\infty(S)$ is induced by an element of $L^1(S)$, there exists some $h\in L^1(S)$ such that $\Gamma F=\int_S Fh\,d\sigma$ for $F\in L^\infty (S)$.

From Lemma \ref{cont map}, the map $U\mapsto\int_S (g\circ U)\cdot h\,d\sigma$ is continuous from $\U$ into $\C$. Thus, there exists a neighborhood $N$ of the identity in $\U$ such that $\text{Re}\int_S (g\circ U)\cdot h\,d\sigma>\frac{1}{2}$ for $U\in N$. We choose a continuous map $\psi:\U\to [0,\infty)$ such that $\int\psi\,dU=1$ and the support of $\psi$ is contained in $N$ (recall $dU$ denotes the Haar measure on $\U$).

We now define a map $\Lambda$ on $L^\infty (S)$ by $$\Lambda F=\int_S h(z)\int_\U \psi(U)\cdot F(Uz)\,dU\,d\sigma(z),\text{ for }F\in L^\infty(S).$$

We fix $F\in L^\infty (S)$ and $z\in S$. If we define the map $\mathscr{F}_z:\U\to\C$ by $U\mapsto F(Uz)$, we then observe that $$\int_\U|\mathscr{F}_z|^2\,dU=\int_\U |F(Uz)|^2\,dU=\int_S |F|^2\,d\sigma=\n{F}_2^2<\infty,$$ by Proposition \ref{prop147}. Then since $\psi$ is continuous, we have that $\psi\in L^2(\U)$ and $\mathscr{F}_z\in L^2(\U)$, and thus by the Schwarz Inequality, we get $$\Big|\int_\U \psi \mathscr{F}_z\,dU\Big|\leq\Big(\int_\U|\psi|^2\,dU\Big)^\frac{1}{2}\Big(\int_\U|\mathscr{F}_z|^2\,dU\Big)^\frac{1}{2}=\n{\psi}_2\cdot\n{F}_2.$$ Thus, for $F\in L^\infty (S)$, $$|\Lambda F|\leq\n{\psi}_2\cdot\n{F}_2\int_S|h|\,d\sigma=\n{\psi}_2\cdot\n{F}_2\cdot\n{h}_1.$$

The linearity of $\Lambda$ on $L^\infty(S)$ follows from linearity of the integral. Thus, $\Lambda$ defines an $L^2$-continuous linear functional on $L^\infty(S)$, and hence extends to an $L^2$-continuous linear functional $\Lambda_1$ on $L^2(S)$. By interchanging the integrals in the definition of $\Lambda$, we see that $\Lambda_1$ annihilates $Y$, since $Y$ is $\U$-invariant.

Further, we have that $$\text{Re }\Lambda_1 g=\int_\U\psi(U)\Big(\text{Re}\int_S g(Uz)\cdot h(z)\,d\sigma(z)  \Big)\,dU>\int_\U\psi(U)\cdot\frac{1}{2}\,dU=\frac{1}{2}.$$ Thus, $\Lambda_1$ is a continuous linear functional on $L^2(S)$ that annihilates $Y$ but not $g$, and hence $g\notin\overline{Y}^{2}$. This completes the proof of Lemma \ref{main lemma}.
\end{proof}

\subsection{The Proof of Theorem \ref{nec and suff condition}.}\label{proof nec and suff condition}

Observe that Theorem \ref{nec and suff condition} is not easily established due to the fact that multiplication of $L^\infty$-functions is not continuous with respect to the weak*-topology. However, this multiplication is \textit{separately} continuous (Definition \ref{def sep cont}), which is sufficient to prove Theorem \ref{nec and suff condition}.

\begin{definition}\label{def sep cont}
Let $A$ be a topological space and $m$ a map $A\times A\to A$. For each element $a\in A$, we define the maps

\begin{itemize}

    \item[] $m_a:A\to A$ given by $m_a(x)=m(a,x)$, and
    
    \item[]$m^a:A\to A$ given by $m^a(x)=m(x,a)$.
    
\end{itemize}

We say $m$ is \textbf{separately continuous} if the maps $m_a$ and $m^a$ are continuous for $a\in A$. Further, if $B\subset A$, we say $B$ is \textbf{invariant} under $m$ if $m(B\times B)\subset B$.
\end{definition}

\begin{proposition}\label{mf weak* cont}
Let $f\in L^\infty(S)$. The left multiplication operator $m_f:L^\infty (S)\to L^\infty(S)$ defined by $m_f(h)=fh$ is weak*-continuous.
\end{proposition}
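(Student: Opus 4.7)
The plan is to use the universal property of the weak*-topology: since the weak*-topology on $L^\infty(S)$ is the weakest topology making every functional $\Lambda_g$ continuous (for $g\in L^1(S)$), the map $m_f$ is weak*-continuous if and only if $\Lambda_g\circ m_f$ is weak*-continuous for every $g\in L^1(S)$. So I would fix such a $g$ and simply compute this composition directly.

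For $h\in L^\infty(S)$, I would write
$$(\Lambda_g\circ m_f)(h)=\int_S (fh)\cdot g\,d\sigma=\int_S h\cdot(fg)\,d\sigma.$$
The key observation is that $fg\in L^1(S)$, since $f\in L^\infty(S)$ and $g\in L^1(S)$ give $\|fg\|_1\leq\|f\|_\infty\|g\|_1$ by H\"older's inequality. Hence $\Lambda_g\circ m_f=\Lambda_{fg}$, and $\Lambda_{fg}$ is itself one of the defining weak*-continuous functionals on $L^\infty(S)$.

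Since $\Lambda_g\circ m_f$ is weak*-continuous for every $g\in L^1(S)$, the map $m_f$ is weak*-continuous. There is no real obstacle here; the result is essentially immediate from the definition of the weak*-topology together with the trivial fact that multiplication by a fixed $L^\infty$ function is a bounded operator on $L^1(S)$. (By symmetry, the same argument would give weak*-continuity of the right multiplication operator $m^f$, justifying the separate continuity of multiplication that the next subsection needs.)
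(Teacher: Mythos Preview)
Your proof is correct and essentially identical to the paper's: both fix an arbitrary $g\in L^1(S)$, compute $(\Lambda_g\circ m_f)(h)=\int_S h(fg)\,d\sigma$, note that $fg\in L^1(S)$, and conclude $\Lambda_g\circ m_f=\Lambda_{fg}$ is weak*-continuous.
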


\begin{proof}
Let $\Lambda$ be a weak*-continuous linear functional on $L^\infty(S)$ and $g\in L^1(S)$ be such that $\Lambda h=\int_S hg\,d\sigma$ for $h\in L^\infty(S)$. We observe then that $$(\Lambda\circ m_f)(h)=\Lambda(fh)=\int_S (fh)g\,d\sigma=\int_S h(fg)\,d\sigma.$$ Since $fg\in L^1(S)$, we see that $\Lambda\circ m_f$ is a weak*-continuous linear functional on $L^\infty(S)$. We conclude that $m_f$ is weak*-continuous on $L^\infty(S)$.
\end{proof}

We can similarly define a right multiplication operator $m^g:L^\infty(S)\to L^\infty(S)$ given by $m^g(h)=hg$. Since pointwise multiplication for functions is commutative, we have $m^g=m_g$. In particular, the right multiplication operators are also weak*-continuous.

\begin{remark}\label{m sep cont}
By Proposition \ref{mf weak* cont}, the multiplication map $m:L^\infty(S) \times L^\infty(S)\to L^\infty(S)$ given by $m(f,g)=fg$ is separately continuous with respect to the weak*-topology.
\end{remark}

\begin{lemma}\label{sep cont}
Let $A$ be a topological space and $m$ a map $A\times A\to A$. Suppose $B\subset A$ with $B$ invariant under $m$. If $m$ is separately continuous, then $\overline{B}$ is invariant under $m$.
\end{lemma}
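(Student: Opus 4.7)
The plan is to exploit separate continuity of $m$ by fixing one coordinate at a time and pushing the closure through. Take $x, y \in \overline{B}$; I want to conclude $m(x,y)\in\overline{B}$, but I cannot do this in a single stroke because $m$ need not be jointly continuous. So I will proceed in two stages, each using the continuity of one of the partial maps $m_a$, $m^a$.

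First I would fix an arbitrary $b\in B$ and consider the map $m^b:A\to A$. By hypothesis $m^b$ is continuous, and invariance of $B$ gives $m^b(B)\subset B$. A standard topological fact (continuous preimages of closed sets are closed, equivalently $f(\overline{E})\subset\overline{f(E)}$) then yields $m^b(\overline{B})\subset\overline{m^b(B)}\subset\overline{B}$. Thus for every $x\in\overline{B}$ and every $b\in B$, we have $m(x,b)\in\overline{B}$.

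Next I would fix $x\in\overline{B}$ and turn to $m_x:A\to A$, which is continuous by separate continuity. The previous paragraph rephrased says $m_x(B)\subset\overline{B}$. Applying the same $f(\overline{E})\subset\overline{f(E)}$ principle, $m_x(\overline{B})\subset\overline{m_x(B)}\subset\overline{\overline{B}}=\overline{B}$, so $m(x,y)=m_x(y)\in\overline{B}$ for every $y\in\overline{B}$. This completes the argument, and then Remark \ref{m sep cont} will let us apply the lemma to conclude, in the proof of Theorem \ref{nec and suff condition}, that $\overline{E}_\Omega^{*}$ is an algebra as soon as one knows $H(p,q)\cdot H(r,s)\subset\overline{E}_\Omega^{*}$ for $(p,q),(r,s)\in\Omega$.

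There isn't really a hard step here; the only thing to be careful about is the two-stage argument. A naive attempt to net-approximate $x$ and $y$ simultaneously by elements of $B$ and pass to the limit would require joint continuity, which fails; the point of the lemma is precisely that separate continuity suffices, and the two-stage closure argument is what captures this.
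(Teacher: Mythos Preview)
Your proof is correct and essentially identical to the paper's: both argue in two stages, first using continuity of $m^b$ for $b\in B$ to get $m(\overline{B}\times B)\subset\overline{B}$, then using continuity of $m_x$ for $x\in\overline{B}$ together with $\overline{\overline{B}}=\overline{B}$ to finish. The paper merely phrases the first stage as a separate claim, but the logic and the key inclusion $f(\overline{E})\subset\overline{f(E)}$ are used in exactly the same way.
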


\begin{proof}
We begin by proving the following claim: If $f\in\overline{B}$, then $m_f(B)\subset\overline{B}$.

Let $g\in B$. Since $B$ is invariant under $m$, we get that $m^g(B)\subset B$. Observe since $m$ is separately continuous, the map $m^g$ is continuous, and thus we have $$m_f(g)=m^g(f)\in m^g\big(\overline{B}\big)\subset\overline{m^g(B)}\subset\overline{B}.$$ Since $g\in B$ was arbitrary, this proves the claim.

We are now ready to prove the lemma.

Let $f,g\in\overline{B}$. Then, we have that $$m(f,g)=m_f(g)\in m_f(\overline{B})\subset\overline{m_f(B)},$$ since $m_f$ is continuous. However, by the above claim, $m_f(B)\subset\overline{B}$, and thus $m(f,g)\in\overline{B}$.

Since we chose $f,g$ arbitrarily from $\overline{B}$, we conclude that $\overline{B}$ is invariant under $m$.
\end{proof}

\begin{proof}[Proof of Theorem \ref{nec and suff condition}.]
The forward direction is straightforward. We suppose $\overline{E}_\Omega^{*}$ is an algebra and fix pairs $(p,q),(r,s)\in\Omega$. Let $h\in H(p,q)\cdot H(r,s)$. Then, we have that $$h=\sum_{k=1}^nf_kg_k,$$ where $f_k\in H(p,q)$ and $g_k\in H(r,s)$ for all $k$. Since $H(a,b)\subset E_\Omega\subset\overline{E}_\Omega^{*}$ for $(a,b)\in\Omega$, we have that $f_k,g_k\in\overline{E}_\Omega^{*}$ for all $k$. Since $\overline{E}_\Omega^{*}$ is an algebra, we have that $f_kg_k\in\overline{E}_\Omega^{*}$ for all $k$, and further $h\in\overline{E}_\Omega^{*}$. Thus, $H(p,q)\cdot H(r,s)\subset\overline{E}_\Omega^{*}$, which completes the forward direction.

We now show the backward direction. Suppose $H(p,q)\cdot H(r,s)\subset\overline{E}_\Omega^{*}$ for $(p,q),(r,s)\in\Omega$. To show that $\overline{E}_\Omega^{*}$ is an algebra, we must verify that $\overline{E}_\Omega^{*}$ is invariant under the multiplication map $m:L^\infty(S)\times L^\infty(S)\to L^\infty(S)$ given by $m(f,g)=fg$ for $f,g\in L^\infty(S)$. By Remark~\ref{m sep cont}, this multiplication map is separately continuous. Thus, by showing that $E_\Omega$ is invariant under $m$, we can get that $\overline{E}_\Omega^{*}$ is invariant under $m$ by Lemma \ref{sep cont}.

Let $f,g\in E_\Omega$. Then,

\begin{itemize}

    \item[] $f=\sum_{i=1}^nf_i$, where $f_i\in H(p_i,q_i)$ and $(p_i,q_i)\in\Omega$, for $1\leq i\leq n$, for some $n$, and
    
    \item[] $g=\sum_{j=1}^mg_j$, where $g_j\in H(r_j,s_j)$ and $(r_j,s_j)\in\Omega$, for $1\leq j\leq m$, for some $m$.

\end{itemize}

Further, we have that $$fg=\sum_{i=1}^n\sum_{j=1}^mf_ig_j,\text{ and }f_ig_j\in H(p_i,q_i)\cdot H(r_j,s_j)\text{ for all }i,j.$$ Since $H(p_i,q_i)\cdot H(r_j,s_j)\subset\overline{E}_\Omega^{*}$ for all $i,j$, we have that $H(p_i,q_i)\cdot H(r_j,s_j)\subset E_\Omega$ for all $i,j$ by Lemma \ref{stitch}. Hence, $f_ig_j\in E_\Omega$ for all $i,j$. As a subspace, $E_\Omega$ is in particular closed under finite sums, and thus $fg\in E_\Omega$. Since $f$ and $g$ were chosen arbitrarily in $E_\Omega$, we conclude that $E_\Omega$ is invariant under $m$.

As stated before, this allows us to apply Lemma \ref{sep cont}. Thus, $\overline{E}_\Omega^{*}$ is invariant under $m$, and hence $\overline{E}_\Omega^{*}$ is an algebra.
\end{proof}

\section{M\"obius Invariant Spaces}\label{M inv section}

\subsection{The Weak*-Closed \texorpdfstring{$\M$}{M}-Invariant Subspaces of \texorpdfstring{$L^\infty(S)$}{L(S)}}\label{M inv spaces}

In this section we determine the weak*-closed $\M$-invariant subspaces of $L^\infty(S)$ (Theorem \ref{M-inv subspaces of Linf}). We begin by noting that every $\M$-invariant subset of $L^\infty(S)$ is also $\U$-invariant, since every bijective holomorphic map from $B$ to $B$ is a unitary transformation. Thus by Theorem~\ref{weak*-closed U inv}, determining the weak*-closed $\M$-invariant subspaces of $L^\infty(S)$ only requires finding those sets $\Omega\subset Q$ such that $\overline{E}_\Omega^{*}$ is $\M$-invariant. Similarly to Theorem B of \cite{NR}, there are only six such spaces.

\begin{theorem}\label{M-inv subspaces of Linf}
The following are the weak*-closed $\M$-invariant subspaces of $L^\infty(S)$:
\begin{itemize}
\item[1.] the null space $\{0\}$,
\item[2.] the space of constant functions $\C$,
\item[3.] the space $H^\infty(S)$,
\item[4.] the space $\text{Conj}(H^\infty(S))$,
\item[5.] the weak*-closure of the space $H^\infty(S)+\text{Conj}(H^\infty(S))$, and
\item[6.] the space $L^\infty(S)$.
\end{itemize}
\end{theorem}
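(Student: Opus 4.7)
The plan is to reduce to the $C(S)$ case by intersecting with $C(S)$, then invoke Nagel and Rudin's Theorem~B (the six-space list quoted just before Lemma~\ref{M C(S) lemma}).

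Since $\M$-invariance implies $\U$-invariance, Theorem~\ref{weak*-closed U inv} tells us that every weak*-closed $\M$-invariant $Y\subset L^\infty(S)$ has the form $Y=\overline{E}_\Omega^{*}$ for some $\Omega\subset Q$. So the task reduces to determining which $\Omega$ can occur and then identifying $\overline{E}_\Omega^{*}$.

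First I would set $Y^c:=Y\cap C(S)$ and argue that $Y^c$ is a closed $\M$-invariant subspace of $C(S)$: it is $\M$-invariant since each $\phi\in\M$ extends to a homeomorphism of $\overline{B}$, and it is uniformly closed because uniform convergence implies $L^\infty$-norm convergence and hence weak*-convergence (via dominated convergence against any $L^1$-functional). Crucially, $Y$ and $Y^c$ produce the same $\Omega$: by Remark~\ref{main remark E} we have $\pi_{pq}f\in H(p,q)\subset E_\Omega\subset Y$ for every $f\in Y$ with $(p,q)\in\Omega$, and since $H(p,q)\subset C(S)$, this yields $\pi_{pq}Y\subset Y^c$, so $\pi_{pq}Y=\pi_{pq}Y^c$.

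Applying Nagel and Rudin's Theorem~B to the closed $\M$-invariant subspace $Y^c$ of $C(S)$ then forces $Y^c$ to be one of their six listed spaces, which corresponds to
\[
\Omega\in\bigl\{\emptyset,\ \{(0,0)\},\ \{(p,0):p\geq 0\},\ \{(0,q):q\geq 0\},\ \{(p,0):p\geq 0\}\cup\{(0,q):q\geq 0\},\ Q\bigr\}.
\]
It remains to identify $\overline{E}_\Omega^{*}$ in each case. The first two give $\{0\}$ and $\C$ trivially, and $\Omega=Q$ gives $L^\infty(S)$ via Remark~\ref{main remark} combined with $\overline{E}_Q^{2}=L^2(S)$ from Theorem~\ref{L2 Hpq structure}(e). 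For $\Omega=\{(p,0):p\geq 0\}$ I would observe that each $\pi_{pq}$ is weak*-continuous on $L^\infty(S)$: the formula $\pi_{pq}f(z)=\int_S f\overline{K_z}\,d\sigma$ with $\overline{K_z}\in L^1(S)$ gives weak*-continuity pointwise in $z$, and $H(p,q)$ is finite-dimensional. Consequently $H^\infty(S)$, being the intersection $\bigcap_{q>0}\ker\pi_{pq}$ (a standard description, following from the Taylor expansion of $H^\infty(B)$-functions and the uniqueness of the $L^2$ decomposition in Remark~\ref{f_pq=pi_pq f}), is weak*-closed and coincides with $\overline{E}_\Omega^{*}$ by Remark~\ref{main remark E}. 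Conjugation then handles $\Omega=\{(0,q):q\geq 0\}$, and the mixed case follows by combining the previous two: $E_{\Omega_5}\supset E_{\Omega_3}\cup E_{\Omega_4}$ gives $\overline{E}_{\Omega_5}^{*}\supset H^\infty(S)+\text{Conj}(H^\infty(S))$, while the reverse inclusion is automatic from taking weak*-closures.

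The main obstacle is the identification $\overline{E}_\Omega^{*}=H^\infty(S)$ for $\Omega=\{(p,0):p\geq 0\}$, which rests on the description $H^\infty(S)=\{f\in L^\infty(S):\pi_{pq}f=0\text{ for all }q>0\}$. Once this standard fact about $H^\infty(S)$ is in hand, the reduction to $C(S)$ via $Y^c$ carries all the structural content, and the six-member list drops out directly from Nagel and Rudin's theorem.
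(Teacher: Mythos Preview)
Your proposal is correct and follows the same core strategy as the paper: intersect with $C(S)$ and invoke the Nagel--Rudin classification. There is one organizational difference worth noting. The paper first isolates an $L^\infty$-analogue of Lemma~\ref{M C(S) lemma} (its Lemma~\ref{main M lemma}: if $H(p,q)\subset Y$ then $H(p\pm 1,q)\subset Y$), proving it by passing to $Y_C=Y\cap C(S)$ and applying Lemma~\ref{M C(S) lemma}, and then iterates that lemma to reach the six $\Omega$'s. You instead apply Theorem~B wholesale to $Y^c$, which is slightly cleaner: your observation that $\pi_{pq}Y=\pi_{pq}Y^c$ (because $H(p,q)\subset E_\Omega\subset Y\cap C(S)$) transfers the full classification in one step rather than one coordinate shift at a time.

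On the identification $\overline{E}_\Omega^{*}=H^\infty(S)$ for $\Omega=\{(p,0)\}$, you are right that this is the crux, and the paper devotes a separate lemma (Lemma~\ref{Omega 3}) to it. Your sketch of the forward inclusion $H^\infty(S)\subset\bigcap_{q>0}\ker\pi_{pq}$ via Taylor expansion matches the paper's Lemma~\ref{pi pq holomorphic} and a dominated-convergence argument on the radial dilates. For the reverse inclusion, however, the paper does real work that your phrase ``standard description'' hides: given $f\in L^\infty(S)$ with $\pi_{pq}f=0$ for $q>0$, one must produce an $H^\infty(B)$ function with radial limits $f$. The paper takes $P[f]$, shows it is bounded, and then proves it is holomorphic by exhibiting it as a uniform-on-compacta limit of the holomorphic partial sums $P[s_k]$ where $s_k=\sum_{p\le k}\pi_{p0}f$, using $s_k\to f$ in $L^1$ and boundedness of the Poisson kernel on compacta. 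Your appeal to weak*-continuity of $\pi_{pq}$ is not actually needed here (Remark~\ref{main remark E} already gives $\overline{E}_\Omega^*=\bigcap_{q>0}\ker\pi_{pq}$), and it does not supply this reverse inclusion.
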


The proof of this result uses the following lemmas. The first (Lemma \ref{main M lemma}) is the analogue to Lemma \ref{M C(S) lemma}, which it also requires in its proof. The use of the second, Lemma \ref{Omega 3}, is clear from the statement.

\begin{lemma}\label{main M lemma}
Let $Y$ be a weak*-closed $\M$-invariant subspace of $L^\infty(S)$, $p\geq 1$ an integer, and $H(p,q)\subset Y$ for some $q\geq 0$. Then $H(p-1,q)\subset Y$ and $H(p+1,q)\subset Y$.
\end{lemma}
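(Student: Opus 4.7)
The plan is to reduce the statement to its $C(S)$ counterpart, Lemma \ref{M C(S) lemma}, by producing inside $Y$ a closed $\M$-invariant subspace of $C(S)$ to which that lemma applies directly. Pick a nonzero $f \in H(p,q) \subset Y$ (if $H(p,q) = \{0\}$ there is nothing substantive to prove). Since $f$ is a polynomial, $f \in C(S)$, and because each $\phi \in \M$ restricts to a homeomorphism of $S$, every $f \circ \phi$ lies in $C(S)$. Let $Y_0$ denote the uniform closure in $C(S)$ of the linear span of the orbit $\{f \circ \phi : \phi \in \M\}$.

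Two properties of $Y_0$ are then immediate. First, $Y_0 \subset Y$: each $f \circ \phi$ lies in $Y$ by the $\M$-invariance of $Y$, so the linear span is contained in $Y$; the uniform norm on $C(S)$ coincides with the $L^\infty$-norm and the weak*-topology on $L^\infty(S)$ is weaker than its norm topology, so any uniform limit is also a weak* limit, and the weak*-closed subspace $Y$ absorbs the uniform closure. Second, $Y_0$ is itself $\M$-invariant in $C(S)$: the span is invariant under composition with any $\psi \in \M$, and for each such $\psi$ the operator $g \mapsto g \circ \psi$ is a uniform isometry of $C(S)$, so uniform closures are preserved.

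The next step is to upgrade the single vector $f \in Y_0 \cap H(p,q)$ to the full inclusion $H(p,q) \subset Y_0$. Since $\U \subset \M$, $Y_0$ is $\U$-invariant, so $Y_0 \cap H(p,q)$ is a nonzero $\U$-invariant subspace of $H(p,q)$; by the $\U$-minimality asserted in Theorem \ref{L2 Hpq structure}(b), this intersection must equal all of $H(p,q)$. Lemma \ref{M C(S) lemma} then applies to the closed $\M$-invariant subspace $Y_0 \subset C(S)$ and yields $H(p-1,q) \subset Y_0$ and $H(p+1,q) \subset Y_0$. Combined with $Y_0 \subset Y$, this gives the conclusion.

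I do not foresee a genuine obstacle here: the substantive content is already packaged in Lemma \ref{M C(S) lemma}, and what remains is the routine observation that the weak*-topology on $L^\infty(S)$ is coarser than the uniform topology on $C(S) \subset L^\infty(S)$. This is precisely what lets the weak*-closed space $Y$ swallow the uniform closure of the continuous functions it already contains, which is the only bridge needed between the $C(S)$-case and the $L^\infty$-case.
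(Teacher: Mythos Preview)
Your proof is correct and takes essentially the same approach as the paper: pass to a closed $\M$-invariant subspace of $C(S)$ inside $Y$ that contains $H(p,q)$, then apply Lemma~\ref{M C(S) lemma}. The paper simply takes $Y_C = Y \cap C(S)$ (uniformly closed because weak*-closed implies norm-closed, and $\M$-invariant as an intersection of $\M$-invariant spaces) rather than building the orbit closure, which sidesteps your $\U$-minimality step, but the underlying idea is identical.
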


\begin{proof}
Since $Y$ is $\M$-invariant, $Y$ is in particular $\U$-invariant, and thus there exists some $\Omega\subset Q$ such that $Y=\overline{E}_\Omega^{*}$ by Theorem \ref{weak*-closed U inv}. Consequently, we get $H(p,q)\subset E_\Omega$.

We now define $Y_C=Y\cap C(S)$. Since $Y$ is weak*-closed in $L^\infty(S)$, we have that $Y$ is norm-closed in $L^\infty(S)$, and hence $Y_C$ is closed in $C(S)$.

Further, $Y_C$ is $\M$-invariant as the intersection of $\M$-invariant spaces. We also observe that $E_\Omega\subset Y_C$ and hence $H(p,q)\subset Y_C$.

Thus by Lemma \ref{M C(S) lemma}, we have $H(p-1,q)\subset Y_C$ and $H(p+1,q)\subset Y_C$, and the desired result follows accordingly.
\end{proof}

\begin{lemma}\label{Omega 3}
Let $\Omega$ be the set $\{(p,q):q=0\}$. Then the space $\overline{E}_{\Omega}^{*}$ is the space $H^\infty(S)$.
\end{lemma}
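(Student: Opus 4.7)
The plan is to combine Remark \ref{main remark E} with a classical characterization of $H^\infty(S)$ via the projections $\pi_{pq}$. By Remark \ref{main remark E},
\[
\overline{E}_\Omega^{*} \;=\; \{f \in L^\infty(S) : \pi_{pq}f = 0 \text{ whenever } q > 0\},
\]
so the lemma reduces to proving that $H^\infty(S)$ coincides with this set.

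For the inclusion $H^\infty(S) \subset \overline{E}_\Omega^{*}$, I would take $f \in H^\infty(S)$ as the almost-everywhere radial boundary limit of some $F \in H^\infty(B)$. Since $\sigma$ is a probability measure, $F$ lies in $H^2(B)$ and admits a homogeneous expansion $F = \sum_p F_p$, with $F_p$ a holomorphic polynomial homogeneous of degree $p$ (so $F_p|_S \in H(p,0)$), converging in $H^2$. Passing to boundary values yields $f = \sum_p F_p|_S$ in $L^2(S)$, and pairwise orthogonality of the spaces $H(p,q)$ from Theorem \ref{L2 Hpq structure}(e) forces $\pi_{p0}f = F_p|_S$ and $\pi_{pq}f = 0$ for every $q > 0$.

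For the reverse inclusion, I would take $f \in L^\infty(S)$ satisfying $\pi_{pq}f = 0$ for all $q > 0$, so that its $L^2$ expansion reduces to $f = \sum_p \pi_{p0}f$. Let $F$ be the Poisson extension of $f$ to $B$; then $F$ is harmonic on $B$, bounded by $\|f\|_\infty$, and has $f$ as its almost-everywhere radial boundary limit. Decomposing $F$ according to the bihomogeneous type of the Poisson kernel gives $F(w) = \sum_p \widetilde{\pi_{p0}f}(w)$, where $\widetilde{h}$ denotes the unique bihomogeneous polynomial extension to $\C^n$ of $h \in H(p,q)$; since each $\widetilde{\pi_{p0}f}$ is a holomorphic polynomial and the series converges uniformly on compact subsets of $B$, $F$ is holomorphic on $B$. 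Hence $F \in H^\infty(B)$ with boundary function $f$, showing $f \in H^\infty(S)$.

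The principal obstacle is the reverse inclusion: it relies on several classical facts about Hardy spaces on the ball — the homogeneous expansion of $H^2(B)$ functions, the bihomogeneous decomposition of the Poisson extension, and the almost-everywhere existence of radial boundary limits for bounded harmonic functions on $B$ — all of which would be cited from Rudin's \emph{Function Theory in the Unit Ball of $\C^n$} rather than reproved here.
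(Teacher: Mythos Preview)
Your proposal is correct and follows essentially the same route as the paper: reduce via Remark~\ref{main remark E} to showing $H^\infty(S)=\{f\in L^\infty(S):\pi_{pq}f=0\text{ for }q>0\}$, and for the inclusion $\overline{E}_\Omega^{*}\subset H^\infty(S)$ take the Poisson extension and argue it is holomorphic. The paper makes that last step explicit by showing $P[s_k]\to P[f]$ uniformly on compacta from the $L^1$ convergence of the partial sums $s_k=\sum_{p\le k}\pi_{p0}f$, which is precisely your bihomogeneous-expansion claim unpacked. The one genuine variant is the forward inclusion: you pass through $H^2(B)$, using the homogeneous expansion of $F\in H^\infty(B)\subset H^2(B)$ and continuity of the boundary-value map into $L^2(S)$, whereas the paper instead takes dilations $g_m(\zeta)=g\big(\tfrac{m-1}{m}\zeta\big)$, applies Lemma~\ref{pi pq holomorphic} to get $\pi_{pq}g_m=0$ for $q>0$, and then uses dominated convergence to conclude $\pi_{pq}f=0$. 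Both arguments are standard and amount to the same underlying fact; your version is slightly slicker at the cost of citing more $H^2$ machinery from \cite{RFT}.
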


The proof of Lemma \ref{Omega 3} uses some results concerning the Poisson kernel and the Poisson integrals of $L^1$-functions. As such, we save the proof of Lemma \ref{Omega 3} for Section \ref{proof Omega 3}.

\begin{proof}[Proof of Theorem \ref{M-inv subspaces of Linf}.]
Since the roles of $p$ and $q$ can be switched in Lemma \ref{M C(S) lemma}, the roles of $p$ and $q$ in Lemma \ref{main M lemma} can also be switched. Thus, if $Y$ is a weak*-closed $\M$-invariant subspace of $L^\infty(S)$ and $\Omega$ is the subset of $Q$ such that $\overline{E}_\Omega^{*}=Y$, repeated application of Lemma \ref{main M lemma} yields only the following six possible options for the set $\Omega$:

\begin{itemize}
\item[(1)] $\Omega=\emptyset$.
\item[(2)] $\Omega=\{(0,0)\}$.
\item[(3)] $\Omega=\{(p,q):q=0\}$.
\item[(4)] $\Omega=\{(p,q):p=0\}$.
\item[(5)] $\Omega=\{(p,q):\text{Either }p=0\text{ or }q=0\}$.
\item[(6)] $\Omega=Q$.
\end{itemize}

These are the same six possible options found in \cite{NR}. We now determine the spaces they induce. This process is easiest for the sets in (1), (2), (6), and a special case for (5).

When $\Omega=\emptyset$, we have that the corresponding weak*-closed $\M$-invariant subspace is the algebra consisting only of the zero function. To show this, observe by Remark \ref{main remark E}, $$\overline{E}_{\Omega}^{*}=\{f\in L^\infty(S):\pi_{pq}f=0\text{ for all }(p,q)\}.$$ Clearly, $0\in\overline{E}_{\Omega}^{*}$, and further, if $g\in\overline{E}_{\Omega}^{*}$, we have $g=\sum\pi_{pq}g=0$.

When $\Omega=\{(0,0)\}$, we have that the corresponding weak*-closed $\M$-invariant subspace is the algebra of constant functions. To show this, observe by Remark \ref{main remark E}, $$\overline{E}_{\Omega}^{*}=\{f\in L^\infty(S):\pi_{pq}f=0\text{ for all }(p,q)\neq(0,0)\}.$$ Clearly, $\C\subset\overline{E}_{\Omega}^{*}$, and further, if $g\in\overline{E}_{\Omega}^{*}$, we have $g=\sum\pi_{pq}g=\pi_{00}g\in\C$.

For $\Omega=Q$, we have that $\overline{E}_{\Omega}^{*}=L^\infty(S)$. This follows from Remark \ref{main remark} and Theorem \ref{L2 Hpq structure}, which states that $\overline{E}_Q^{2}=L^2(S)$.

As a special case for the set in (5), when $n=1$ we get that $\overline{E}_{\Omega}^{*}=L^\infty(S)$. This is due to the fact that either $p=0$ or $q=0$ for each space $H(p,q)$ when $n=1$. Thus, the set $\Omega$ consists of all possible pairs $(p,q)$; that is, $\Omega=Q$. Since $\overline{E}_Q^{*}=L^\infty(S)$, we get that $\overline{E}_{\Omega}^{*}=L^\infty(S)$.

Determining the spaces $\overline{E}_{\Omega}^{*}$ for the remaining options for $\Omega$ ($n>1$ for the fifth option) is not as easy. Fortunately, the descriptions of these spaces from Remark \ref{main remark E} show that determining $\overline{E}_{\Omega}^{*}$ for the set in (3) will suffice to get the remaining two spaces.

By Lemma \ref{Omega 3}, we have that $\overline{E}_{\Omega}^{*}=H^\infty(S)$ when $\Omega=\{(p,q):q=0\}$. Observe then for any nonnegative integer $p$, $$H(0,p)=\text{Conj}(H(p,0)).$$ Consequently, we get $\overline{E}_{\Omega}^{*}=\text{Conj}(H^\infty(S))$ when $\Omega=\{(p,q):p=0\}$.

When $\Omega=\{(p,q):\text{Either }p=0\text{ or }q=0\}$, we similarly have that $$\overline{E}_{\Omega}^{*}=\{f\in L^\infty(S):\pi_{pq}f=0\text{ when }p>0\text{ and }q>0\},$$ which we note is the weak*-closure of the algebraic sum $H^\infty(S)+\text{Conj}(H^\infty(S))$ by the previous two cases.
\end{proof}

\subsection{The Weak*-Closed \texorpdfstring{$\M$}{M}-Invariant Subalgebras of \texorpdfstring{$L^\infty(S)$}{L(S)}}\label{M inv algebras}

In this section we determine the weak*-closed $\M$-invariant subalgebras of $L^\infty(S)$ (Theorem \ref{M-inv subalgebras of Linf}). Our process is fairly straightforward since we only have the six spaces of Theorem \ref{M-inv subspaces of Linf} to consider.

\begin{theorem}\label{M-inv subalgebras of Linf}
The following are the weak*-closed $\M$-invariant subalgebras of $L^\infty(S)$:
\begin{itemize}
\item[1.] the null space $\{0\}$,
\item[2.] the space of constant functions $\C$,
\item[3.] the space $H^\infty(S)$,
\item[4.] the space $\text{Conj}(H^\infty(S))$,
\item[5.] the weak*-closure of the space $H^\infty(S)+\text{Conj}(H^\infty(S))$ if and only if $n=1$, and
\item[6.] the space $L^\infty(S)$.
\end{itemize}
\end{theorem}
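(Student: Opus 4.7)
The plan is to leverage Theorem \ref{M-inv subspaces of Linf}, which already enumerates the only six candidate weak*-closed $\M$-invariant subspaces of $L^\infty(S)$; determining the weak*-closed $\M$-invariant subalgebras then reduces to checking which of those six are closed under pointwise multiplication. Cases (1), (2), and (6), namely $\{0\}$, $\C$, and $L^\infty(S)$, are manifestly algebras. For case (3), $H^\infty(S)$ is an algebra because products of bounded holomorphic functions on $B$ are bounded holomorphic and radial limits behave multiplicatively almost everywhere; case (4), $\text{Conj}(H^\infty(S))$, then follows by conjugating. Essentially all the content of the theorem is concentrated in case (5).

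For case (5), set $\Omega_5 = \{(p,q) \in Q : p = 0 \text{ or } q = 0\}$, so that the space in question is $\overline{E}_{\Omega_5}^{*}$; by Remark \ref{main remark E} this equals $\{f \in L^\infty(S) : \pi_{pq}f = 0 \text{ whenever } p>0 \text{ and } q>0\}$. When $n = 1$, every pair $(p,q)$ with $H(p,q) \neq \{0\}$ already satisfies $p=0$ or $q=0$, since on $\C$ the computation $\partial_z\partial_{\bar z}(z^p\bar z^q) = pq\, z^{p-1}\bar z^{q-1}$ shows harmonicity fails whenever $p,q \geq 1$. Consequently $\overline{E}_{\Omega_5}^{*} = L^\infty(S)$, which is an algebra; this observation was already recorded inside the proof of Theorem \ref{M-inv subspaces of Linf}.

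When $n \geq 2$, the plan is to exhibit two elements of $\overline{E}_{\Omega_5}^{*}$ whose product escapes the space. Take $f(z) = z_1$ and $g(z) = \bar z_2$, so that $f \in H(1,0) \subset H^\infty(S)$ and $g \in H(0,1) \subset \text{Conj}(H^\infty(S))$, and hence both lie in $\overline{E}_{\Omega_5}^{*}$. The product $fg(z) = z_1 \bar z_2$ is homogeneous of bidegree $(1,1)$ and is harmonic on $\C^n$, since $\sum_j \partial_{z_j}\partial_{\bar z_j}(z_1 \bar z_2) = 0$; hence $z_1 \bar z_2 \in H(1,1)$ and $\pi_{11}(fg) = z_1 \bar z_2 \neq 0$. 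Because $(1,1) \notin \Omega_5$, the characterization above shows $fg \notin \overline{E}_{\Omega_5}^{*}$, so the space fails to be an algebra when $n \geq 2$.

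There is no serious obstacle: once Theorem \ref{M-inv subspaces of Linf} is in hand, the whole result comes down to this single explicit product. The dimension restriction enters precisely because $H(1,1) = \{0\}$ when $n = 1$, whereas for $n \geq 2$ the polynomial $z_1 \bar z_2$ supplies a nonzero harmonic bihomogeneous polynomial witnessing the failure of multiplicative closure under weak*-closure.
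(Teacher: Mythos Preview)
Your proof is correct and follows essentially the same approach as the paper: reduce to the six candidates from Theorem \ref{M-inv subspaces of Linf}, observe that all but case (5) are obviously algebras, and for (5) use the explicit product $z_1\bar z_2$ (the paper uses $z_1\bar z_n$, which is the same idea) to show failure of multiplicative closure when $n\geq 2$. Your additional remarks on why $H^\infty(S)$ is an algebra and why $z_1\bar z_2$ is harmonic are details the paper leaves implicit.
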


\begin{proof}
The spaces listed in the above statement are the weak*-closed $\M$-invariant subspaces of $L^\infty(S)$ by Theorem \ref{M-inv subspaces of Linf}. Further, all except the weak*-closure of $H^\infty(S)+\text{Conj}(H^\infty(S))$ are clearly algebras for all $n$. Recall when $n=1$, we have that the weak*-closure of $H^\infty(S)+\text{Conj}(H^\infty(S))$ is $L^\infty(S)$, which is clearly an algebra. We thus need only show that for $n>1$, the weak*-closure of $H^\infty(S)+\text{Conj}(H^\infty(S))$ is not an algebra.

We fix $\Omega=\{(p,q):\text{Either }p=0\text{ or }q=0\}$ and consider the coordinate functions $f(z)=z_1$ and $g(z)=\bar{z}_n$. Clearly, both $f$ and $g$ are elements of the weak*-closure of $H^\infty(S)+\text{Conj}(H^\infty(S))$, since $f\in H(1,0)\subset E_{\Omega}$ and $g\in H(0,1)\subset E_{\Omega}$. However, their product, given by $(fg)(z)=z_1\bar{z}_n$, is an element of $H(1,1)$. Hence $fg\notin\overline{E}_{\Omega}^{*}$ and thus the weak*-closure of $H^\infty(S)+\text{Conj}(H^\infty(S))$ is not an algebra when $n>1$.
\end{proof}

\subsection{The Proof of Lemma \ref{Omega 3}}\label{proof Omega 3}

The proof of Lemma \ref{Omega 3} uses the following lemma in addition to standard results concerning the Poisson integrals of $L^1$-functions. These results can be found in many texts, one of which is \cite{HFT}. 

\begin{lemma}\label{pi pq holomorphic}
Let $f$ be holomorphic on $B$ and $0<r<1$. If we define $f_r(\zeta)=f(r\zeta)$ for $\zeta\in S$, then $\pi_{pq}f_r=0$ whenever $q>0$.
\end{lemma}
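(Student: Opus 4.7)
The plan is to exploit the power series representation of $f$ on $B$ together with the $L^2$-orthogonality of the spaces $H(p,q)$ from Theorem \ref{L2 Hpq structure}(e). Since $f$ is holomorphic on the ball $B$, it admits a multivariable power series expansion $f(z) = \sum_{\alpha} c_\alpha z^\alpha$ converging absolutely and uniformly on every compact subset of $B$. Because $r<1$, the set $r\overline{B}$ is such a compact subset, so the partial sums $s_N(z) = \sum_{|\alpha|\leq N} c_\alpha z^\alpha$ satisfy $s_N(r\zeta) \to f(r\zeta)$ uniformly in $\zeta \in S$. In other words, $(s_N)_r \to f_r$ uniformly on $S$, hence also in $L^2(S)$.

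Next I would identify which $H(p,q)$-components the approximating polynomials live in. Each monomial $\zeta \mapsto \zeta^\alpha = \zeta_1^{\alpha_1}\cdots\zeta_n^{\alpha_n}$ is a holomorphic polynomial, hence harmonic on $\C^n$ (the Laplacian in complex coordinates is a constant multiple of $\sum_j \partial_{z_j}\partial_{\bar z_j}$, which annihilates holomorphic functions). Being homogeneous of total degree $|\alpha|$ in the $z$-variables and $0$ in the $\bar{z}$-variables, it lies in $H(|\alpha|,0)$. Consequently, the finite sum
$$(s_N)_r(\zeta) = \sum_{|\alpha|\leq N} c_\alpha r^{|\alpha|}\zeta^\alpha$$
belongs to the algebraic sum of the spaces $H(p,0)$ for $p \geq 0$.

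The result then follows from orthogonality combined with $L^2$-continuity of $\pi_{pq}$. Fix $(p,q)$ with $q>0$. Since $\pi_{pq}$ is the orthogonal projection of $L^2(S)$ onto $H(p,q)$ and $H(p,q) \perp H(p',0)$ for every $p' \geq 0$, we have $\pi_{pq}(s_N)_r = 0$ for every $N$. Using the $L^2$-continuity of $\pi_{pq}$ and passing to the limit yields
$$\pi_{pq} f_r \;=\; \lim_{N\to\infty} \pi_{pq}(s_N)_r \;=\; 0,$$
which is the desired conclusion.

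I do not anticipate any genuine obstacle: the argument amounts to bookkeeping with a power series and invoking the standard $L^2$-orthogonality of the $H(p,q)$. The only step that warrants a moment of care is checking that pure $z$-monomials genuinely belong to $H(\,\cdot\,,0)$, which is immediate from their homogeneity and holomorphicity.
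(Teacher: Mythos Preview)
Your proof is correct and follows essentially the same approach as the paper's: both use the power series expansion of $f$, observe that it converges uniformly (hence in $L^2$) on $S$ after dilation by $r$, and conclude that the $H(p,q)$-expansion of $f_r$ involves only terms with $q=0$. You spell out a bit more explicitly why each monomial lies in $H(|\alpha|,0)$ and invoke the $L^2$-continuity of $\pi_{pq}$ directly, whereas the paper phrases the last step as ``hence is the unique $L^2$ expansion of $f_r$'', but these are the same argument.
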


\begin{proof}
Since $f$ is holomorphic on $B$, $f$ has a power series representation $\sum f_\alpha z^\alpha$ on $B$ which converges absolutely and uniformly to $f$ on compacta of $B$ (\cite{RFT}).

Observe then that $f_r\in C(S)$ and $\sum f_\alpha r^{|\alpha|}\zeta^\alpha$ converges uniformly to $f_r$ on $S$. In particular, $\sum f_\alpha r^{|\alpha|}\zeta^\alpha$ converges to $f_r$ in $L^2(S)$, and hence is the unique $L^2$ expansion of $f_r$.

Clearly then, $\pi_{pq}f_r=0$ whenever $q>0$.
\end{proof}

The next two theorems follow from standard results of Poisson integrals.

\begin{theorem}\label{P[f] bdd}
If $f\in L^\infty(S)$, then $P[f]$ is bounded on $B$.
\end{theorem}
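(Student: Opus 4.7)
The plan is a direct pointwise estimate using the two fundamental properties of the invariant Poisson kernel $P(z,\zeta)$ for the unit ball: first, $P(z,\zeta)\geq 0$ for all $(z,\zeta)\in B\times S$, and second, $\int_S P(z,\zeta)\,d\sigma(\zeta)=1$ for every $z\in B$. Both are among the standard facts referenced to \cite{HFT}, so they may be used without further justification.

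First I would fix $z\in B$ and pass the absolute value inside the Poisson integral, using nonnegativity of the kernel to obtain $|P[f](z)|\leq\int_S P(z,\zeta)|f(\zeta)|\,d\sigma(\zeta)$. Next, I would majorize $|f(\zeta)|$ by $\n{f}_\infty$ (which holds $\sigma$-a.e.\ on $S$) and pull the constant out of the integral, giving
$$|P[f](z)|\leq\n{f}_\infty\int_S P(z,\zeta)\,d\sigma(\zeta)=\n{f}_\infty$$
by the unit-mass property. Since this bound is uniform in $z\in B$, we conclude $\sup_{z\in B}|P[f](z)|\leq\n{f}_\infty<\infty$, which is exactly the claimed boundedness of $P[f]$ on $B$.

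There is no substantial obstacle here: the statement is essentially an immediate corollary of the two standard properties of the Poisson kernel, and in fact the argument even yields the sharper estimate $\sup_B|P[f]|\leq\n{f}_\infty$. The only minor point to verify is that the essential-supremum bound $|f(\zeta)|\leq\n{f}_\infty$ holding $\sigma$-a.e.\ on $S$ is enough to preserve the inequality under integration against the nonnegative measure $P(z,\zeta)\,d\sigma(\zeta)$, which follows from monotonicity of the Lebesgue integral.
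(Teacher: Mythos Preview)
Your argument is correct and is precisely the standard proof of this fact. The paper does not supply its own proof here; it simply states that the theorem ``follows from standard results of Poisson integrals'' with a reference to \cite{HFT}, and your pointwise estimate using nonnegativity and the unit-mass property of the Poisson kernel is exactly the standard argument behind that citation.
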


\begin{theorem}\label{P[f] holom}
If $f$ is holomorphic on $B$ and continuous on $\overline{B}$, then $f(z)=P[f](z)$ for $z\in B$. Consequently, $P[f]$ is holomorphic on $B$. 
\end{theorem}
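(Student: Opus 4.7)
The plan is to reduce the claim to uniqueness of solutions of the Dirichlet problem on the ball. First I would observe that every holomorphic function on $B$ is harmonic there when $B$ is viewed as an open subset of $\R^{2n}$: the Cauchy--Riemann equations force the real and imaginary parts of $f$ to satisfy Laplace's equation. Combined with the hypothesis that $f$ is continuous on $\overline{B}$, this shows that $f$ is a harmonic function on $B$ which extends continuously to $\overline{B}$ with boundary values $f|_S\in C(S)$.

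Next I would invoke the classical property of the Poisson integral (one of the standard results recorded in \cite{HFT}): for every $g\in C(S)$, the function $P[g]$ is harmonic on $B$, extends continuously to $\overline{B}$, and equals $g$ on $S$. Applying this to $g=f|_S$ produces a second harmonic function on $B$ that is continuous on $\overline{B}$ and agrees with $f$ on $S$.

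Uniqueness of the Dirichlet problem then finishes the first assertion: the difference $f-P[f]$ is harmonic on $B$, continuous on $\overline{B}$, and identically zero on $\partial B = S$, so by the maximum principle it is identically zero throughout $\overline{B}$, giving $f(z)=P[f](z)$ for $z\in B$. The ``consequently'' clause is then immediate, since $P[f]$ coincides with the holomorphic function $f$ on $B$. There is no real obstacle here; the theorem is essentially a routine packaging of the harmonicity of holomorphic functions with the Poisson solution of the Dirichlet problem on the ball. The only point worth stating carefully is which Poisson kernel is meant -- the real (harmonic) Poisson kernel, for which the Dirichlet solvability on $B\subset\R^{2n}$ is the classical fact one needs.
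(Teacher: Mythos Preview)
Your argument is correct and is precisely the standard justification for this fact. The paper itself does not supply a proof: it simply records the statement as one of two results that ``follow from standard results of Poisson integrals,'' with a reference to \cite{HFT}. So there is nothing to compare against beyond noting that your sketch is the expected unpacking of that citation --- harmonicity of holomorphic functions combined with the Poisson solution and uniqueness of the Dirichlet problem on the real ball $B\subset\R^{2n}$. Your closing caveat about which Poisson kernel is intended is apt; the reference to \cite{HFT} indicates the real (harmonic) Poisson kernel, for which your maximum-principle argument applies directly.
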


\begin{proof}[Proof of Lemma \ref{Omega 3}.]
Let $\Omega=\{(p,q):q=0\}$. The proof is split into two parts, each part verifying a set inclusion. In the first part, we show that $H^\infty(S)\subset\overline{E}_\Omega^{*}$.

Suppose that $f\in H^\infty(S)$. To show that $f\in\overline{E}_\Omega^{*}$, we verify that $\pi_{pq}f=0$ whenever $q>0$. Let $g\in H^\infty(B)$ be the function such that $$\lim_{r\to 1}g(r\zeta)=f(\zeta)$$ for almost all $\zeta\in S$. For each positive integer $m$, we define the function $g_m$ on $S$ by $$g_m(\zeta)=g\Big(\frac{m-1}{m}\zeta\Big)$$ for $\zeta\in S$. Then, $$\lim_{m\to\infty}g_m(\zeta)=f(\zeta)$$ for almost all $\zeta\in S$.

We now fix $(p,q)\in Q$ and $z\in S$. Recall from Theorem \ref{L2 Hpq structure}(a), there exists a unique element $K_z\in H(p,q)$ such that $$(\pi_{pq}h)(z)=\int_S h\overline{K}_z\,d\sigma,$$ for $h\in L^2(S)$. Clearly, then, for $z\in S$, we have that $$\lim_{m\to\infty}g_m(\zeta)\overline{K}_z(\zeta)=f(\zeta)\overline{K}_z(\zeta)$$ for almost all $\zeta\in S$.

Each $K_z\in C(S)$ and hence has uniform norm $\n{K_z}$. Further, since $g\in H^\infty(B)$, $g$ has uniform norm $\n{g}$. Thus, for each $z\in S$, $$|g_m(\zeta)\overline{K}_z(\zeta)|\leq\n{g}\cdot\n{K_z},$$ for $\zeta\in S$. Thus, by the Lebesgue Dominated Convergence Theorem, $$\lim_{m\to\infty}\int_S g_m(\zeta)\overline{K}_z(\zeta)\,d\sigma(\zeta)=\int_S f(\zeta)\overline{K}_z(\zeta)\,d\sigma(\zeta).$$ Hence, $(\pi_{pq}g_m)(z)\to (\pi_{pq}f)(z)$ as $m\to\infty$, for $z\in S$.

From Lemma \ref{pi pq holomorphic}, we have that $\pi_{pq}g_m=0$ whenever $q>0$. Hence, $\pi_{pq}f=0$ whenever $q>0$, and thus $f\in\overline{E}_\Omega^{*}$. This completes the first part of the proof.

In the second part of the proof, we show that $\overline{E}_\Omega^{*}\subset H^\infty(S)$. Suppose $f\in\overline{E}_\Omega^{*}$. To show that $f\in H^\infty(S)$, we must find some function $g\in H^\infty(B)$ such that $$\lim_{r\to 1}g(r\zeta)=f(\zeta)$$ for almost all $\zeta\in S$.

We let $P[f]$ denote the Poisson integral of $f$. By Theorem \ref{P[f] bdd}, $P[f]$ is bounded on $B$ and further, $P[f]$ has radial limits $f(\zeta)$ at almost every $\zeta\in S$. We lastly show that $P[f]$ is holomorphic on $B$ as the uniform limit on compacta of a sequence of holomorphic functions.

Recall from Remark \ref{f_pq=pi_pq f}, the sum $\sum\pi_{pq}f$ converges absolutely and unconditionally to $f$ in the $L^2$-norm. Let $\pi_p$ denote the map $\pi_{p0}$ for all $p$. the previous sum then becomes $$\sum_{p=0}^\infty\pi_pf,$$ since $\pi_{pq}f=0$ for $q>0$. We then have $$f=\sum_{p=0}^\infty\pi_pf,$$ with convergence in the $L^2$-norm.

Each $\pi_pf$ is a holomorphic polynomial as an element of $H(p,0)$, and so further is each partial sum $s_k$, defined by $$s_k=\sum_{p=0}^k\pi_pf.$$ Observe $s_k$ converges to $f$ in the $L^2$-norm, and hence in the $L^1$-norm. Since each $s_k$ is holomorphic on $B$ and continuous on $\overline{B}$, the corresponding Poisson integral $P[s_k]$ is holomorphic on $B$ by Theorem \ref{P[f] holom}.

Let $K$ be a compact subset of $B$. Since the Poisson kernel is bounded on compact subsets of $B\times S$, there exists some $M_K>0$ such that $$P(z,\zeta)\leq M_K\text{ for }(z,\zeta)\in K\times S.$$

Suppose now that $z\in K$. We thus have $$\Big|P[s_k](z)-P[f](z)\Big|\leq\int_S|s_k(\zeta)-f(\zeta)|P(z,\zeta)\,d\sigma(\zeta)\leq M_K\int_S|s_k(\zeta)-f(\zeta)|\,d\sigma(\zeta).$$ The integral farthest to the right above goes to 0, since $s_k\to f$ in the $L^1$-norm, and hence $$P[s_k]\to P[f]$$ uniformly on $K$. We conclude that $P[s_k]$ converges to $P[f]$ uniformly on compacta of $B$, and thus, $P[f]$ is holomorphic on $B$.

Since $f$ is the radial limit almost everywhere on $S$ of a bounded holomorphic function on $B$, we conclude that $f\in H^\infty(S)$, which completes the proof.
\end{proof}

\bibliographystyle{unsrt}
\bibliography{bibliography}

\end{document}